\documentclass[11pt]{elsarticle}
\usepackage{amsmath,amssymb,amsthm}
\usepackage{mathrsfs}
\usepackage{graphicx}
\usepackage{tikz}
\usepackage{enumerate}
\usepackage{float}
\usepackage[normalem]{ulem}

\usetikzlibrary{calc}
\theoremstyle{plain}
\newtheorem{theorem}{Theorem}[section]
\newtheorem{lemma}[theorem]{Lemma}
\newtheorem{proposition}[theorem]{Proposition}
\theoremstyle{definition}
\newtheorem{definition}[theorem]{Definition}
\newtheorem{remark}[theorem]{Remark}
\newtheorem{corollary}[theorem]{Corollary}
\newtheorem{example}[theorem]{Example}
\newtheorem{problem}[theorem]{Problem}

\makeatletter

\newcommand{\End}[1]{[#1\to #1]}
\newcommand{\id}{\mathrm{id}}
\newcommand{\cl}{\mathrm{cl}}
\newcommand{\fin}{\mathrm{fin}}
\newcommand{\intt}{\mathrm{int}}
\newcommand{\rmnum}[1]{\romannumeral #1}
\newcommand{\Rmnum}[1]{\expandafter@slowromancap\romannumeral #1@}
\makeatother
\newcommand{\dda}{\mathord{\mbox{\makebox[0pt][l]{\raisebox{-.4ex}{$\downarrow$}}$\downarrow$}}}
\newcommand{\dua}{\mathord{\mbox{\makebox[0pt][l]{\raisebox{.4ex}{$\uparrow$}}$\uparrow$}}}

\newcommand{\da}{{\downarrow}}
\newcommand{\ua}{{\uparrow}}
\makeatletter
\def\ps@pprintTitle{%
  \let\@oddhead\@empty
  \let\@evenhead\@empty
  \def\@oddfoot{\reset@font\hfil\thepage\hfil}
  \let\@evenfoot\@oddfoot
}\makeatother

\begin{document}
\begin{frontmatter}

\title{Forbidden substructures for coherence of domains \tnoteref{t1}}
\tnotetext[t1]{Supported by NSF of China (No.12371457).}

\author[1]{Rongqi Xiao}
\ead{rongqixiao2024@163.com}

\author[1]{Xiaodong Jia\corref{a1}}
\ead{jiaxiaodong@hnu.edu.cn}
\cortext[a1]{Corresponding author.}
\address[1]{School of Mathematics, Hunan University, Changsha, Hunan, 410082, China}

\begin{abstract}
A coherent domain, in the sense of Dana Scott's domain theory, is a domain in which the intersection of every two compact saturated subsets is again compact, when the domain is equipped with the Scott topology. Coherence plays key roles in classifying Cartesian closed subcategories of domains and in characterizing Lawson compactness of domains.   

In this paper, we find two typical domains that fail to be coherent, and prove that a bounded algebraic domain fails to be coherent if and only if it has one of the two typical domains as its Scott-continuous retract. Similar results also generalize to bounded continuous domains, provided that the domains in consideration are hereditarily Lindel\"of and weakly Hausdorff in the Scott topology. 

\end{abstract}
\begin{keyword}
Domains, \textcolor{black}{Scott topology, Lawson compact}, coherence, forbidden substructures.
\end{keyword}

\end{frontmatter}
\section{Introduction}
The characterization of certain mathematical properties by forbidden substructures is an old but charming theme. In graph theory, the celebrated Wagner's theorem tells us that a graph is planar if and only if one \emph{cannot} find copies of $K_5$ (the complete graph on 5 vertices) or of $K_{3,3}$ (the complete bipartite graph on 3+3 vertices) as a minor in it. In lattice theory, the $M_3$-$N_5$ theorem shows that a lattice $L$ is  distributive if and only if $N_5$ or $M_3$ \emph{cannot} lattice-embed in $L$. 
Results about forbidden substructures in Dana Scott's domain theory are of particular interest to non-Hausdorff topologists and theoretical computer scientists, as they are extensively used in analyzing the order-theoretic and topological structures of higher function spaces of domains, which are in turn applied to semantics of programming languages. This typical line of research can be easily spotted in A.Jung's celebrated Ph.D thesis \cite{jung88c}, where he manoeuvred forbidden substructures of domains {\color{black}skillfully} and eventually proved that there are exactly two maximal Cartesian closed subcategories of pointed domains, laying a fundamental theory for the theory of semantic categories. Forbidden structures for meet-continuous dcpo's (directed-complete posets), in particular, are considered in the Ph.D thesis of the second author \cite{jia2018meet}, and by using his developed forbidden structure results he and coauthors were able to prove that \textcolor{black}{the category of domains and that of  quasi-continuous domains} share the same Cartesian closed subcategories, solving a question posed by Goubault-Larrecq \cite{goubault2010omega, goubault12}. Other works related to forbidden substructures in domain theory can be found in \cite{XI2018126, zhang96, liu1996solutions}, to name a few. 

In domain theory, coherence is an indispensable topological property in characterizing Lawson compactness of dcpo's: a dcpo is Lawson compact if and only if it is well-filtered, finitely grounded and coherent \cite{xi2017well, jia2018meet}. \textcolor{black}{A topological space is \emph{coherent} if binary intersections of compact saturated subsets are again compact}, which is automatically valid in Hausdorff topologies but not in general non-Hausdorff ones. A dcpo is called coherent if it is coherent as a topological space in the Scott topology. As domains are automatically well-filtered, coherence coincides with Lawson compactness on domains with finitely many minimal elements. That reveals the importance of coherence. 

In this paper, we try to characterize coherence of domains {\color{black}in terms of order-theoretic forbidden substructures.} Explicitly, we give two classes of forbidden substructures for coherence on domain structures, and show that a bounded algebraic domain{\color{black}, i.e., an algebraic domain with a least
element $\perp$ and a greatest element $\top$} fails to be coherent if and only if it contains, as a Scott-continuous retract, one of the copies of the forbidden substructures. Moreover, we generalize that result to bounded domains, this time with \textcolor{black}{hereditarily Lindel\"ofness} and  weak Hausdorffness assumed. {\color{black}We conclude the paper with some open questions.}

\section{Preliminaries}
The following definitions can be found in \cite{abramsky94, gierz03, goubault13a}.

Let $L$ be a poset and $X$ a subset of $L$. Let $X^{u}=\{b\in L\mid$ for every $x\in X,~b\geqslant x\}$, $\da X=\{b\in L \mid ~$for some $x\in X,~ b\leqslant x\}$ and $\ua X=\{b\in L\mid~ $for some $x\in X,~ b\geqslant x\}$. {\color{black}A subset $A$ of $L$ is called an \emph{anti-chain} if any two distinct elements of $A$ are incomparable. For subsets $A\subseteq B\subseteq L$, $A$ is \emph{cofinal in $B$} if for every $b\in B$ there exists $a\in A$ with $b\leqslant a$.} A subset $D$ of $L$ is called \emph{directed} if it is non-empty and for all $d_1,d_2\in D$, there exists $d_3\in D$ such that $\ d_3\geqslant d_1,d_2$. Dually, a subset $F$ of $L$ is called \emph{filtered} if it is non-empty and for all $f_1,f_2\in F$, there exists $f_3\in F$ such that $f_3\leqslant f_1,f_2$. A poset $L$ is called a \emph{directed-complete poset $($dcpo$)$} if every directed subset of $L$ has a supremum in $L$. A dcpo $L$ is called a \emph{bicomplete dcpo} if its order dual $L^{op}$ is a dcpo as well. For any two elements $x,y$ in $L$, $x$ is \emph{way below} $y$, denoted by $x\ll y$, if for every directed subset $D\subseteq L$ with existing supremum $\sup D$, that $y\leqslant \sup D $ always implies that  $x\in {\da D}$. Let $\dda x=\{y\in L \mid y\ll x\}$ and $\dua x=\{y\in L \mid x\ll y\}$. A poset is called \emph{continuous} if for every $x\in L$, the set $\dda x$ is directed and $x=\sup \dda x$. Continuous dcpo's are also called \emph{continuous domains} or simply \emph{domains}. Let $L$ be a dcpo. An element $x\in L$ is called \emph{compact} if $x \ll x$, and the subset of all compact elements in $L$ is denoted by $K(L)$. A dcpo $L$ is called \emph{algebraic} if for every $x\in L, \da x\cap K(L)$ is directed and $\sup (\da x\cap K(L))=x$, and algebraic dcpo's are also called \emph{algebraic domains}. Algebraic domains are instances of domains. 

A subset $U$ of $L$ is \emph{Scott open} if $U=\ua U$ and for every  directed subset $D$ for which $\sup D$ exists, $\sup D\in U$ implies $D\cap U\neq \varnothing$. The collection of all Scott open subsets of $L$ is a topology called the \emph{Scott topology} of $L$ and will be denoted by $\sigma (L)$. The space $(L,\sigma (L))$ is usually referred as a \emph{Scott space} and written as $\Sigma L$. {\color{black}A topological space is \emph{second countable} if its topology has a countable base. A domain $L$ is said to be \emph{second countable} if the Scott space $\Sigma L$ is second countable. Without further reference, we always equip $L$ with the
Scott topology $\sigma(L)$.} When $L$ is a domain, $\dua x$ is always Scott open for every $x\in L$. 
Let $L$ and $M$ be two dcpo's, a function $f:L \rightarrow M$ is called \emph{Scott-continuous} if it is monotone and for every directed subset $D$ of $L$, $f(\sup D)=\sup f(D)$. A map $f:L \rightarrow M$ is Scott-continuous if and only if $f:\Sigma L \rightarrow \Sigma M$ is continuous in the topological sense. {\color{black} A dcpo $X$ is called \emph{meet-continuous} if for every $x\in X$ and every directed subset $D\subseteq X$ with $x\leq \bigvee D$, one has $x\in \cl_{\sigma}\big(\da x\cap\da D\big)$. Domains are always meet-continuous \cite{gierz03}.}

Let $X$ be a $T_0$ topological space. We denote $\mathcal{O}(X)$ the set of all open subsets of $X$, ordered by the inclusion order $\subseteq$. The \emph{specialization order} $\leqslant_\tau$ on $X$ is defined by $x\leqslant_\tau y$ if and only if $x\in \cl(\{y\})$, where $\cl(\{y\})$ is the closure of $\{y\}$. 
Subsets of $X$ that are upper sets in the specialization order are called \emph{saturated}. A subset $K$ of $X$ is \emph{compact} if for all open cover $\left \{ U_i\right \}_{i\in I}$ of $K$, one can always extract a finite subcover $\left\{ U_j\right \}_{j\in J}$ of $K$ where $J\subseteq^{\fin} I$. Compactness can also be characterized by using directed open families: a subset $K$ of $X$ is \emph{compact} if and only if for all directed families of opens $\left \{ U_i\right \}_{i\in I}$, that $K\subseteq \textstyle\bigcup_{i\in I} U_i$ implies $\ K\subseteq U_{i_0}$ for some $i_0\in I$. A topological space $X$ is \emph{locally compact} if for every $x\in X$ and $U$ an open set with $x\in U$, there is a compact saturated subset $Q\subseteq X$ such that $x\in \intt(Q)$ and $Q \subseteq U$, {\color{black}where $\intt(Q)$ is the topological interior of $Q$.}
 
\textcolor{black}{We denote $\mathcal{Q}(X)$ the set} of all compact saturated subsets of $X$ with the Smyth order $\sqsubseteq $ defined by $A\sqsubseteq B$ if and only if $B \subseteq A$. {\color{black}A $T_0$ space $X$ is said to be \textit{well-filtered} if for every open set $U$ and every filtered family $\mathcal F\subseteq Q(X)$ (under the inclusion order), $\bigcap \mathcal F\subseteq U$ implies that $F\subseteq U$ for some $F\in\mathcal F$.} In a well-filtered space, the intersection of a filtered family of compact saturated subsets is again compact saturated. 
 A topological space $X$ is said to be \emph{coherent} if for any two compact saturated subsets $Q_1,Q_2$ of $X$, $Q_1\cap Q_2$ is compact (and it's automatically saturated). 
A non-empty subset $A$ of a topological space $X$ is said to be \emph{irreducible} if for any finite family $\left \{ C_f\right \}_{f\in F}$ of closed sets, that $A\subseteq \textstyle\bigcup_{f\in F} C_f$ implies that $A\subseteq C_{f_0}$ for some $f_0$. A topological space is \emph{sober} if every irreducible closed subset $A$ is the closure of a unique element $a\in X,\ i.e.,\ A=\cl(\{a\})$ for a unique $a\in X$. Sober spaces are well-filtered \cite{gierz03}.

\begin{proposition}\label{b2}\cite{gierz03}
Every continuous domain $L$ is a sober, hence well-filtered, and \textcolor{black}{locally compact space in the Scott topology. In particular}, for every Scott open set $U$ of $\Sigma L$ and $x\in U$, there exists $y\in U$, such that $x\in \dua y\subseteq \ua y \subseteq U$.
\end{proposition}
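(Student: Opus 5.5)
The plan is to prove the ``in particular'' clause first, since local compactness, and in effect sobriety, follow from it via the interpolation property of $\ll$ in a continuous domain. Fix a Scott open $U$ and $x\in U$. Because $L$ is continuous, $\dda x$ is directed with $\sup \dda x = x\in U$. Scott openness of $U$ then yields some $z\in \dda x\cap U$. The interpolation property (if $z\ll x$ then $z\ll y\ll x$ for some $y$) will be used twice.

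\textbf{The clause itself.} First, $\dua y$ is Scott open. It is an upper set. If $\sup D\in\dua y$ for a directed set $D$, interpolate $y\ll w\ll \sup D$; then $w\leqslant d$ for some $d\in D$, so $d\in\dua y$. Now choose $y$ with $z\ll y\ll x$. Then $y\geqslant z\in U$ gives $y\in U$, and hence $x\in\dua y\subseteq \ua y\subseteq U$.

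\textbf{Local compactness.} The set $\ua y$ is compact saturated, because any open cover of $\ua y$ has a member containing $y$, and that member then contains all of $\ua y$. Since $x\in\dua y\subseteq\intt(\ua y)$, the space $\Sigma L$ is locally compact.

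\textbf{Sobriety.} Let $A$ be an irreducible Scott closed set. I would show that $D=\dda A=\bigcup_{a\in A}\dda a$ is directed.
\begin{itemize}
\item $D$ is nonempty, because $A$ is nonempty and each $\dda a$ is directed, hence nonempty.
\item Given $d_1,d_2\in D$, the open sets $\dua d_1$ and $\dua d_2$ both meet $A$.
\item By irreducibility, equivalently the fact that any two opens meeting $A$ have an intersection meeting $A$, there is $a\in A\cap \dua d_1\cap\dua d_2$.
\item Directedness of $\dda a$ then gives a common upper bound of $d_1,d_2$ inside $D$.
\end{itemize}
Set $s=\sup D$. Since $D\subseteq A$ and $A$ is closed under directed sups, $s\in A$, so $\da s\subseteq A$. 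Conversely, each $a\in A$ satisfies $a=\sup\dda a\leqslant s$, so $A=\da s=\cl\{s\}$. Uniqueness follows from $T_0$: Scott closures of points are principal downsets, and antisymmetry finishes it. Well-filteredness then follows from the cited fact that sober spaces are well-filtered.

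\textbf{Main obstacle.} The key step is the directedness of $\dda A$. It is where irreducibility and the openness of the sets $\dua d$ are combined, and it is also why the clause has to be established first.
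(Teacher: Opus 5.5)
Your proof is correct. The paper gives no proof of Proposition~\ref{b2}; it is quoted from \cite{gierz03}, so you are supplying a self-contained version of a result the paper simply imports.

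Your route is the standard direct one:
\begin{enumerate}
\item Scott openness of $\dua y$ comes from interpolation.
\item Local compactness comes from the compact saturated sets $\ua y$, whose interiors contain $\dua y$.
\item For sobriety, you show that $\dda A=\bigcup_{a\in A}\dda a$ is directed for an irreducible closed $A$, because $\dua d_1$ and $\dua d_2$ both meet $A$ and hence meet it jointly. You then conclude $A=\da s$ with $s=\sup\dda A$.
\end{enumerate}
The citation keeps the paper short. Your version makes explicit that the only nontrivial ingredient is the interpolation property of $\ll$. Well-filteredness still rests, exactly as in the paper, on the cited fact that sober spaces are well-filtered.

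Two minor remarks. First, the second interpolation $z\ll y\ll x$ is superfluous: $z$ itself already gives $x\in\dua z\subseteq\ua z\subseteq U$. Interpolation is genuinely needed only for the Scott openness of $\dua y$, which puts $x$ in $\intt(\ua y)$ and drives the directedness argument. Second, to make the proof fully self-contained, you can derive interpolation by applying continuity to $\bigcup_{w\ll x}\dda w$. This set is directed with supremum $x$, so any $z\ll x$ lies below some $u\ll w\ll x$, giving $z\ll w\ll x$.
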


\begin{definition}\label{a3}
A \emph{retract} of a topological space $Y$ is a  topological space $X$ such that there are two continuous maps $s: X\rightarrow Y$(section) and $r:Y\rightarrow X$(retraction) such that $r\circ s=\id_X $. 
\end{definition}
For dcpo's $X,Y$, we say that $X$ is a \emph{Scott-retract} of $Y$ if and only if $\Sigma X$ is a retract of $\Sigma Y$. 
Many of the aforementioned topological properties on spaces such as {\color{black}continuity}, sobriety, well-filteredness and coherence, are preserved by retractions, together with bi-completeness, an order-theoretic property. 

\subsection{Bicompleteness}

We recall that a dcpo $L$ is called a \emph{bicomplete dcpo} if its order dual $L^{op}$ is a dcpo.

\begin{proposition}\label{a4}\cite{jia2018meet}
Let $L$ be a bicomplete dcpo and dcpo $M$ {\color{black}be} a retract of $L$. Then $M$ is bicomplete.
\end{proposition}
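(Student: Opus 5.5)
Let $s\colon M\to L$ and $r\colon L\to M$ be the section and retraction, so that $r\circ s=\id_M$. Here ``retract'' is read as in the surrounding text: $M$ is a Scott-retract of $L$, so $s$ and $r$ are continuous between the Scott spaces and hence Scott-continuous maps between dcpo's. We must show that every filtered subset $F\subseteq M$ has an infimum in $M$. Only monotonicity of $s$ and $r$ will be needed, together with the Scott-continuity of $r$ for a limit argument. The plan is to push $F$ into $L$ via $s$, take the infimum there (which exists by bicompleteness of $L$), and pull it back with $r$.

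First, $s$ is monotone, since continuous maps preserve the specialization order and the specialization order of $\Sigma M$ is the order of $M$. Hence $s(F)$ is filtered in $L$, and since $L^{op}$ is a dcpo, $a=\inf s(F)$ exists in $L$. Put $m=r(a)$. For every $f\in F$ we have $a\leqslant s(f)$, so by monotonicity of $r$ we get $m=r(a)\leqslant r(s(f))=f$. Thus $m$ is a lower bound of $F$.

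Next we show that $m$ is the greatest lower bound. Let $b\in M$ be any lower bound of $F$. Then $s(b)\leqslant s(f)$ for all $f\in F$, so $s(b)$ is a lower bound of $s(F)$ in $L$ and therefore $s(b)\leqslant a$. Applying $r$ gives $b=r(s(b))\leqslant r(a)=m$. Hence $m=\inf F$ in $M$.

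It follows that every filtered subset of $M$ has an infimum, so $M^{op}$ is a dcpo. $M$ itself is a dcpo by hypothesis, so $M$ is bicomplete. There is no real obstacle here. The only point needing care is that the topological retraction maps are monotone for the underlying orders, which holds because Scott-continuous maps are monotone.
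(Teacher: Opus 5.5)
Your argument is correct and is the standard proof of this fact; the paper gives no proof of its own and only cites it. You push $F$ forward along $s$, take $a=\inf s(F)$ in $L$, and verify that $r(a)$ is $\inf F$ using only $r\circ s=\id_M$ and the monotonicity of $r$ and $s$. Your opening plan mentions a limit argument using the Scott-continuity of $r$, but the proof never uses one, so the result in fact holds for monotone retractions.
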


In the following example, the three dcpo's fail to be bicomplete for obvious reasons. 

\begin{example}\label{a5}
The following examples are algebraic domains but none of them is bicomplete:
\begin{itemize}
    \item Let $\mathbb{N}$ be the natural numbers with the usual order and consider its dual poset $\mathbb{N}^{op}$, then $\mathbb{N}^{op}$ is not bicomplete;
    \item Let $\mathcal{K}(\mathbb{N}^{op})=\mathbb{N}^{op} \cup \left \{ a,b\right \}$, where $a$ and $b$ are two incomparable elements that are not in $\mathbb{N}^{op}$ and they are both strictly smaller than any element in $\mathbb{N}^{op}$, then $\mathcal{K}(\mathbb{N}^{op})$ is not bicomplete;
    \item Let $\mathcal{K}(\mathbb{N}^{op})_{\bot}$ be the dcpo obtained by adding a least element $\bot$ to $\mathcal{K}(\mathbb{N}^{op})$, then $\mathcal{K}(\mathbb{N}^{op})_{\bot}$ is not bicomplete.
\end{itemize}

\begin{figure}[H]
\begin{center}
\begin{tikzpicture}

\coordinate[shape=circle, fill=white,draw=black, inner sep=1pt] (a) at (-2, 0); 
\coordinate[shape=circle, fill=white,draw=black, inner sep=1pt] (b) at (2, 0);  
\coordinate[shape=circle, fill=white,draw=black, inner sep=1pt] (c) at (0, -1.5);  

 \coordinate[] (Zn1) at (0, 1);  
 \coordinate[] (Zn2) at (0, 1.5);  
 \coordinate[] (Zn3) at (0, 2);  
 \coordinate[shape=circle, fill=white,draw=black, inner sep=1pt] (zn-1) at (0, 2.5);  
 \coordinate[shape=circle, fill=white,draw=black, inner sep=1pt] (zn) at (0, 3);  
 \coordinate[shape=circle, fill=white,draw=black, inner sep=1pt] (zn+1) at (0, 3.5);  
 \coordinate[shape=circle, fill=white,draw=black, inner sep=1pt] (zn+2) at (0, 4);  

    \draw (a) -- (c);
    \draw (b) -- (c);
    \draw (zn+2) -- (zn+1);
    \draw (zn) -- (zn+1);
    \draw (zn-1) -- (zn);
    \draw[dashed] (zn-1) -- (Zn1);

    \node[left] at (a) {$ a $};
    \node[right] at (b) {$ b $};
    \node[left] at (c) {$ \bot $};
    \node[left] at (zn+2) {$ 0 $};
    \node[left] at (zn+1) {$ 1 $};
    \node[left] at (zn) {$ 2 $};
    \node[left] at (zn-1) {$ 3 $};

\end{tikzpicture}
\\Figure 1: $\mathcal{K}(\mathbb{N}^{op})_{\bot}$ in Example \ref{a5}
\end{center}
\end{figure}
\end{example}

The above three non-bicomplete examples could be generalized in a similar fashion. The second author \cite{jia2019dichotomy} used their generalized forms to give a characterization theorem for bicompleteness, on which our results rely.

\begin{definition} \label{a1}
A chain $C$ is said to be \emph{well-ordered} if every non-empty subset $A\subseteq C$ has a least element and we say a chain $C$ is \emph{downward well-ordered} if its order dual chain $C^{op}$ is well-ordered.
\end{definition}

\begin{definition}\label{a6}
For every downward well-ordered chain $C$ without a bottom element, we define 
\begin{itemize}
    \item $\mathcal{K}(C)=C\cup \left \{ a,b\right \}$, where $a$ and $b$ are two incomparable elements below $C$, and
    \item $\mathcal{K}(C)_{\bot}$ to be the lifting of $\mathcal{K}(C)$ by adding a bottom element $\bot$.
\end{itemize}
\end{definition}

\begin{theorem}\label{a7}\cite{jia2019dichotomy}
Let $L$ be a sober dcpo. If every minimal element in $L$ is a compact element, then the following are equivalent:
\begin{enumerate}[(i)]
\item $L$ is not bicomplete;
\item $L$ has some $C$, $\mathcal{K}(C)$ or $\mathcal{K}(C)_\bot$ as a Scott-retract, where $C$ is a downward well-ordered chain without a bottom.
\end{enumerate}
\end{theorem}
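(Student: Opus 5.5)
Theorem \ref{a7} is cited from \cite{jia2019dichotomy}. To prove it, I would handle the two implications separately.

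\textbf{(ii)$\Rightarrow$(i).} This direction is easy. Each of $C$, $\mathcal{K}(C)$ and $\mathcal{K}(C)_\bot$ fails to be bicomplete, because the chain $C$ is directed in $L^{op}$ (it is filtered) but has no infimum: $C$ has no bottom, and in the last two cases the only lower bounds of $C$ are the incomparable elements $a,b$ (and $\bot$), so there is no greatest lower bound. By the contrapositive of Proposition \ref{a4}, a bicomplete $L$ cannot have any of these as a Scott-retract.

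\textbf{(i)$\Rightarrow$(ii).} Suppose $L^{op}$ is not a dcpo. Then some filtered set $F\subseteq L$ has no infimum. A filtered set always contains a cofinal (downward) well-ordered chain, so we may take $F=C$ to be a downward well-ordered chain without a least element; choose $C$ of minimal cofinality, so $C$ is order-isomorphic to a regular cardinal reversed. Let $B=C^l$ be the set of lower bounds of $C$. There are three cases:
\begin{enumerate}[(a)]
\item $B=\varnothing$;
\item $B\neq\varnothing$ and some two maximal elements of $B$ have no common lower bound;
\item otherwise.
\end{enumerate}
Since $C$ has no infimum, $B$ has no greatest element whenever $B\neq\varnothing$. $B$ is Scott closed, hence a dcpo, so by Zorn it has at least two maximal elements $a\neq b$, and these are incomparable. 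The hypothesis that minimal elements are compact enters exactly here: it supplies a common lower bound, or a minimal compact element below $a$ and $b$, which we can use as $\bot$.

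\textbf{Constructing the retraction.} The target $T$ is one of $C$, $\mathcal{K}(C)$ or $\mathcal{K}(C)_\bot$, and the section $s$ is the inclusion. Define $r\colon L\to T$ by
\[ r(x)=\min\{c\in C : x\le c\} \]
when $x$ lies below some element of $C$ and is not a lower bound of $C$. The minimum exists because $C$ is downward well-ordered. Points in $\ua a\cap B$ go to $a$, points in $\ua b\cap B$ go to $b$, and everything else goes to $\bot$. Points not below any element of $C$ should be sent to the top of $C$; if $C$ has no top, first replace $C$ by its restriction to a suitable tail.

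Monotonicity needs care when $x$ lies above both $a$ and $b$ while still being a lower bound of $C$. This is where the maximality of $a$ and $b$ in $B$ is used: such an $x$ would be an element of $B$ strictly above $a$, contradicting maximality.

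\textbf{Main obstacle.} The hard step is Scott continuity of $r$, i.e.\ that $r^{-1}(\ua t)$ is Scott open for each $t\in T$. For $t=c\in C$ the set is $L\setminus \bigcup_{c'<c}\da c'$, roughly. Closedness of this union uses that $\{c'<c\}$ is well-ordered, so directed sups in $\bigcup_{c'<c}\da c'$ stay inside by regularity, together with sobriety of $L$. Sobriety gives well-filteredness, and the well-filtered argument shows that $\bigcap_{c\in C}\ua c$ is compact-like, which handles the preimages of $\ua a$ and $\ua b$. I expect this continuity verification at limit points to require the most work.
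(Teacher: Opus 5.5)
The direction (ii)$\Rightarrow$(i) is correct. The direction (i)$\Rightarrow$(ii) has a genuine gap in how you handle the set $B=C^l$ of lower bounds.

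Since $a,b$ are maximal in $B$, your $r$ sends only the single point $a$ to $a$. Hence $r^{-1}(\ua a)=(L\setminus B)\cup\{a\}$, and this is Scott open only if $a$ is not the supremum of a directed subset of $B\setminus\{a\}$. Nothing in the hypotheses makes maximal lower bounds compact. For instance, put $C=\mathbb N^{op}$ above two chains $\bot<a_0<a_1<\cdots$ and $\bot<b_0<b_1<\cdots$ with suprema $a$ and $b$. This is an algebraic domain with a least element, you are in your case (c), and $r(a_n)=\bot$ for all $n$ while $r(\sup_n a_n)=a$. The paper only cites Theorem~\ref{a7}, but its self-contained Case~1 of Theorem~\ref{b8} avoids exactly this problem by working with $a,b\in K(L)$, so that $\ua a\setminus\ua b$ is Scott open.

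Your case split is also wrong. Take $B=\{x_1,x_2,a,y,b\}$ with $x_1<a$, $x_1<y$, $x_2<y$, $x_2<b$, and $C=\mathbb N^{op}$ above $B$. Then $a,b$ are maximal with no common lower bound, so you are in case (b). Yet $L$ has no retract of the form $\mathcal K(C')$ at all: a section must send $C'$ coinitially into $C$, so $r$ would map the connected poset $B$ monotonically onto a two-element antichain, which is impossible. Note also that the hypothesis on minimal elements is never actually used in your construction; in case (c) a common lower bound exists by assumption. That is a symptom of the missing idea.

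What is needed is to send whole Scott closed pieces of $B$ to $a$, $b$ and $\bot$. Because $B$ is Scott closed without a greatest element, sobriety says $B$ is not irreducible. So $B=F_1\cup F_2$ with $F_1,F_2$ proper Scott closed subsets. Map $B\setminus F_2$ to $a$, $B\setminus F_1$ to $b$, and $F_1\cap F_2$ to $\bot$; then $r^{-1}(\ua a)=L\setminus F_2$ and $r^{-1}(\ua b)=L\setminus F_1$ are open.

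If $L$ is pointed, then $\bot\in F_1\cap F_2$ and you get $\mathcal K(C)_\bot$ at once. In general, the three cases are as follows.
\begin{itemize}
\item If some element of $B$ lies above no minimal element, Zorn gives a chain below it with no lower bound, hence a retract $C'$.
\item Otherwise, each minimal $p\in B$ is compact, so $\ua p$ is Scott open. If some $B\cap\ua p$ has no greatest element, run the argument above inside the pointed sober dcpo $\ua p$, with $p$ as $\bot$.
\item If every $B\cap\ua p$ has a greatest element $g_p$, the distinct sets $\da g_p$ are pairwise disjoint and cover $B$. Compactness of minimal elements then shows that $B\setminus\da g_p$ is Scott closed, which gives $\mathcal K(C)$.
\end{itemize}

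There are also two smaller errors.
\begin{itemize}
\item A filtered set need not contain a coinitial chain; consider the finite subsets of $\omega_1$ under reverse inclusion. Obtain $C$ instead from the chain characterisation of directed completeness applied to $L^{op}$.
\item $\min\{c\in C\mid x\le c\}$ need not exist, because downward well-ordered chains supply greatest elements, not least ones. Use $\inf$ in $C$, as the paper does. Then $r^{-1}(\ua c)=L\setminus\da c^-$ with $c^-=\max\{c'\in C\mid c'<c\}$, so the step you expected to be the main obstacle is in fact immediate.
\end{itemize}
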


\begin{remark}\label{bicompelte_bot}
    When the sober dcpo $L$ in the above theorem has a least element, then $L$ is not bicomplete if and only if $L$ has $\mathcal{K}(C)_\bot$ as a retract. 
\end{remark}

\subsection{Coherence}

We recall that a topological space $X$ is said to be \emph{coherent} if  for any two compact saturated subsets $Q_1,Q_2$ of $X$, $Q_1\cap Q_2$ is compact. Again, a dcpo $L$ is said to be coherent if and only if it is coherent in the Scott topology. The following lemma given in \cite{jia16a} simplifies the verification of coherence on well-filtered dcpo's.

\begin{lemma}\cite{jia16a}\label{b4}
Let $L$ be a well-filtered dcpo. Then $L$ is coherent if and only if $\ua x\cap \ua y$ is compact for all $x,y\in L$.
\end{lemma}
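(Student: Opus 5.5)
The forward direction is immediate: each $\ua x$ is compact saturated (compact because every open set containing $x$ contains $\ua x$), so coherence of $L$ makes $\ua x\cap\ua y$ compact. The substance of the lemma is the converse, which I would prove in three steps.

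\textbf{Step 1 (reduce to finitely generated sets).} Let $Q_1,Q_2\in\mathcal{Q}(L)$ and suppose every $\ua x\cap\ua y$ is compact. Write each $Q_i$ as a filtered intersection of finitely generated upper sets:
\[
\mathcal{F}_i=\{\ua F \mid F\subseteq^{\fin} L,\ Q_i\subseteq \ua F\}.
\]
To check that $Q_i=\bigcap\mathcal{F}_i$, take $z\notin Q_i$. The set $L\setminus\da z$ is Scott open and contains $Q_i$, because $Q_i$ is saturated. Cover $Q_i$ by the sets $\ua q\subseteq L\setminus\da z$ for $q\in Q_i$; these are not open, so instead use the Scott open sets $L\setminus \da z$ together with a compactness argument. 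The cleanest way to obtain such a finite $F$ in an arbitrary dcpo is not obvious, so the step needs care.

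\textbf{Step 1 (revised).} I would instead work directly with open covers and avoid finitely generated approximants. Let $\{U_j\}_{j\in J}$ be a directed open cover of $Q_1\cap Q_2$. Set
\[
\mathcal{K}=\{K\in\mathcal{Q}(L)\mid K\subseteq Q_1,\ K\cap Q_2\subseteq U_j \text{ for some } j\}.
\]
The goal is to show $Q_1\in\mathcal{K}$.

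\textbf{Step 2 (the case of a principal filter).} Fix $x\in Q_1$. First show that $\ua x\cap Q_2$ is compact. Apply the same idea with the roles reversed: for $y\in Q_2$, the set $\ua x\cap\ua y$ is compact by hypothesis, hence lies inside some $U_j$. Suppose for contradiction that $\ua x\cap Q_2\not\subseteq U_j$ for every $j$. Let $\mathcal{G}$ be the family of compact saturated $K\subseteq Q_2$ with $\ua x\cap K\not\subseteq U_j$ for all $j$. By Zorn's lemma applied with filtered intersections, which stay in $\mathcal{Q}(L)$ by well-filteredness, $\mathcal{G}$ has a minimal member $K_0$. To see that chains have lower bounds in $\mathcal{G}$: if a filtered intersection $\bigcap K_\alpha$ satisfied $\ua x\cap \bigcap K_\alpha\subseteq U_j$, then $\bigcap_\alpha(\ua x\cap K_\alpha)\subseteq U_j$, and $\ua x\cap K_\alpha$ is again compact. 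This is circular, since compactness of $\ua x\cap K_\alpha$ is exactly what we are trying to establish. So the Zorn argument should be run on the quantity "$K\subseteq L\setminus (\ua x\cap\cdots)$" via complements, or more directly as follows.

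For a minimal $K_0$, every $y\in K_0$ gives $\ua x\cap\ua y\subseteq U_{j_y}$. Minimality then gives, for each $y$, an open set $V_y\ni y$ with $\ua x\cap V_y\cap K_0\subseteq U_{j_y}$; this is the delicate point. Cover $K_0$ by finitely many $V_y$ and use directedness of $\{U_j\}$ to obtain a contradiction.

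\textbf{Step 3 (from principal filters to $Q_1$).} Repeat the same minimal-counterexample argument on the $Q_1$ side, now using Step 2 in place of the hypothesis. This yields that $Q_1\cap Q_2$ is compact.

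\textbf{Main obstacle.} The hard part is producing the open neighbourhood $V_y$ in Step 2. I expect to obtain it from well-filteredness applied to the filtered family of compact saturated subsets of $K_0$ containing $y$, together with a Rudin-lemma style argument. A cleaner route is to adapt the argument of \cite{jia16a} through Rudin's lemma: a non-compact $Q_1\cap Q_2$ gives an irreducible closed set of minimal elements, which would contradict the compactness of some $\ua x\cap\ua y$.
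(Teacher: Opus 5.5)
Your forward direction is fine, but the converse is not proved. In Step 2 you correctly notice that the chain condition for Zorn's lemma on $\mathcal G$ is circular. To keep a filtered intersection $\bigcap_\alpha K_\alpha$ inside $\mathcal G$ you would need each $\ua x\cap K_\alpha$ to be compact, and that is exactly what you are trying to prove. So no minimal $K_0$ is available, and the sentence ``For a minimal $K_0$\dots'' has nothing to refer to. Separately, the open sets $V_y\ni y$ with $\ua x\cap V_y\cap K_0\subseteq U_{j_y}$ are only asserted; you call this the delicate point and defer to an unspecified Rudin-lemma argument. Producing such neighbourhoods is the whole content of the lemma, and minimality of $K_0$ does not supply them. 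Step 3 inherits both problems, and the closing remark about irreducible closed sets of minimal elements is not an argument. As written, only the easy direction is established.

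The missing ingredient is that well-filteredness together with the hypothesis makes the following set Scott open, for every $x\in L$ and every Scott open $U$:
\[
V_U=\{y\in L\mid \ua x\cap\ua y\subseteq U\}.
\]
It is an upper set. If $\ua x\cap\ua(\sup D)\subseteq U$ for a directed $D$, then $\ua x\cap\ua(\sup D)=\bigcap_{d\in D}(\ua x\cap\ua d)$ is a filtered intersection of compact saturated sets. Well-filteredness then gives some $d\in D$ with $\ua x\cap\ua d\subseteq U$.

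These are exactly the neighbourhoods you wanted: if $\ua x\cap\ua y\subseteq U$ then $y\in V_U$, and $\ua x\cap V_U\subseteq U$. They also make Zorn and Rudin unnecessary. Take a directed open cover $\mathcal U$ of $\ua x\cap Q_2$. Each $\ua x\cap\ua y$ with $y\in Q_2$ is compact and contained in $\ua x\cap Q_2$, so it lies in some member of $\mathcal U$. Hence $\{V_U\}_{U\in\mathcal U}$ is a directed open cover of $Q_2$. Compactness of $Q_2$ yields a single $U$ with $\ua x\cap Q_2=\bigcup_{y\in Q_2}(\ua x\cap\ua y)\subseteq U$. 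This proves $\ua x\cap Q_2$ is compact for every $x\in L$; you need all $x\in L$, not only $x\in Q_1$, for the next step.

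Your Step 3 then goes through verbatim with $W_U=\{x\in L\mid \ua x\cap Q_2\subseteq U\}$. This set is Scott open by the same argument, since every $\ua d\cap Q_2$ is now known to be compact. Compactness of $Q_1$ then gives $Q_1\cap Q_2\subseteq U$ for a single $U$. So your two-stage architecture was right. What was missing is this openness lemma, and it comes from well-filteredness, not from minimality.
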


As we have mentioned above, coherent spaces are preserved by retractions. The same result applies to coherent dcpo's. {\color{black}The proof is basically borrowed from~\cite[Proposition 2.17]{jung04}, where it is treated for general spaces. 

\begin{proposition}\cite{jung04}\label{b6}
Let $L$ be a coherent dcpo. If a dcpo $M$ is a retract of $L$, then $M$ is coherent.
\end{proposition}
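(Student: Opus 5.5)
We argue directly from the definition of coherence in the Scott topology, using only the continuity of the section and the retraction. Let $s\colon \Sigma M\to\Sigma L$ and $r\colon\Sigma L\to\Sigma M$ be continuous with $r\circ s=\id_M$. Fix compact saturated subsets $Q_1,Q_2$ of $\Sigma M$. We aim to show that $Q_1\cap Q_2$ is compact by writing it as the continuous image under $r$ of an intersection of two compact saturated subsets of $\Sigma L$.

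Step one is to set $P_i=\ua s(Q_i)\subseteq L$ for $i=1,2$. Since $s$ is continuous, $s(Q_i)$ is compact in $\Sigma L$. The saturation of a compact set is again compact, because every open set is an upper set in the specialization order, which for $\Sigma L$ is the order of $L$. Hence each $P_i$ is compact saturated, and by coherence of $L$ the set $P_1\cap P_2$ is compact. Its image $r(P_1\cap P_2)$ is therefore compact in $\Sigma M$.

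Step two, the key point, is to show that $r(P_1\cap P_2)=Q_1\cap Q_2$.

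For the inclusion $\supseteq$: if $x\in Q_1\cap Q_2$, then $s(x)\in P_1\cap P_2$, and $r(s(x))=x$.

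For the inclusion $\subseteq$: take $y\in P_1\cap P_2$. Then $y\geqslant s(q_1)$ for some $q_1\in Q_1$. Scott-continuous maps are monotone, so $r(y)\geqslant r(s(q_1))=q_1$. Since $Q_1$ is an upper set, $r(y)\in Q_1$. The same argument gives $r(y)\in Q_2$.

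Combining the two steps, $Q_1\cap Q_2=r(P_1\cap P_2)$ is compact, so $M$ is coherent.

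The only mildly delicate point is the monotonicity and saturation bookkeeping in the $\subseteq$ inclusion. Everything else is standard preservation of compactness under continuous maps and under taking saturations.
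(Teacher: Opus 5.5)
Your proof is correct and is essentially the paper's argument: both take the compact saturated sets $\ua_L s(Q_1)$ and $\ua_L s(Q_2)$, use coherence of $L$ to get a compact intersection, and then show via monotonicity of $r$ and saturation of the $Q_i$ that $r$ maps this intersection exactly onto $Q_1\cap Q_2$. In your $\supseteq$ inclusion you correctly use only $s(x)\in s(Q_i)\subseteq \ua_L s(Q_i)$; the paper mentions injectivity of $s$ at that step, but it is not needed.
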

\begin{proof}
Let $s:M\to L$ and $r:L\to M$ be Scott-continuous maps with $r\circ s=\mathrm{id}_M$. For $Q_1,Q_2\in \mathcal Q(M)$, the sets
$\ua_L s(Q_1)$ and $\ua_L s(Q_2)$ are compact saturated in $L$.
Since $L$ is coherent, $K=\ua_L s(Q_1)\cap\ua_L s(Q_2)$ is compact. Moreover,
$r(K)=Q_1\cap Q_2.$ To prove $Q_1\cap Q_2\subseteq r(K)$, we let $q\in Q_1\cap Q_2$. Since $s$ is injective, we have 
$s(q)\in \ua_L s(Q_1)\cap\ua_L s(Q_2)=K$, and hence $q=r(s(q))\in r(K)$. Conversely, let $m\in r(K)$. Then $m=r(x)$ for some $x\in K$. Since
$x\in\ua_L s(Q_1)$, there exists $q_1\in Q_1$ such that
$s(q_1)\leq x$. By monotonicity of $r$, $q_1=r(s(q_1))\leq r(x)=m$.
As $Q_1$ is saturated, it follows that $m\in Q_1$. Similarly, from $x\in\ua_L s(Q_2)$, we obtain $m\in Q_2$. Thus $m\in Q_1\cap Q_2$, and so $r(K)\subseteq Q_1\cap Q_2$. Hence $Q_1\cap Q_2$ is compact, being the continuous image of $K$ along $r$. So $M$ is coherent.
\end{proof}}

\section{Coherence on algebraic domains}

In this section, we will try to give concrete forbidden structures to characterize (non)coherent algebraic domains. First, we use Lemma~\ref{b4} to further simplify the verification of coherence on algebraic domains. 

\begin{lemma}\label{b5}
Let $L$ be an algebraic domain, then the following are equivalent:
\begin{enumerate}[(i)]
\item $\Sigma L$ is coherent;
\item $\ua x\cap \ua y$ is compact for all $x, y\in L$;
\item $\ua x\cap \ua y$ is compact for all $x, y\in K(L)$, where $K(L)$ is the set of all compact elements of $L$.
\end{enumerate}
\end{lemma}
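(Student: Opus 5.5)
The plan is to prove (i)$\Leftrightarrow$(ii) directly from earlier results, note that (ii)$\Rightarrow$(iii) is trivial, and then do the real work in (iii)$\Rightarrow$(ii).

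For (i)$\Leftrightarrow$(ii), recall that an algebraic domain is in particular a continuous domain. By Proposition~\ref{b2}, $\Sigma L$ is therefore sober and hence well-filtered. Lemma~\ref{b4} then gives exactly the equivalence of (i) and (ii). The implication (ii)$\Rightarrow$(iii) is immediate, since $K(L)\subseteq L$.

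The remaining implication is (iii)$\Rightarrow$(ii). Fix arbitrary $x,y\in L$ and consider
\[
\mathcal F=\{\ua a\cap \ua b \mid a\in \da x\cap K(L),\ b\in \da y\cap K(L)\}.
\]
Each member of $\mathcal F$ is compact by (iii), and it is saturated as an intersection of upper sets. The family is filtered under inclusion. Indeed, $\da x\cap K(L)$ and $\da y\cap K(L)$ are directed, so given pairs $(a_1,b_1)$ and $(a_2,b_2)$ we can choose compact $a_3\ge a_1,a_2$ below $x$ and $b_3\ge b_1,b_2$ below $y$. Then $\ua a_3\cap\ua b_3$ is contained in both $\ua a_1\cap \ua b_1$ and $\ua a_2\cap\ua b_2$.

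The key step is to check that $\bigcap\mathcal F=\ua x\cap\ua y$. The inclusion $\supseteq$ is clear. For $\subseteq$, suppose $z$ lies in every member of $\mathcal F$. Then $z\ge a$ for every compact $a\le x$, so $z\ge \sup(\da x\cap K(L))=x$ by algebraicity, and likewise $z\ge y$.

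Finally, since $\Sigma L$ is well-filtered, the intersection of a filtered family of compact saturated sets is compact saturated, as recorded in the preliminaries. Hence $\ua x\cap \ua y$ is compact. The only point needing care is that $\mathcal F$ is filtered, which rests on the directedness of $\da x\cap K(L)$ supplied by algebraicity. I expect no real obstacle beyond that.
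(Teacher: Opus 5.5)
Your proposal is correct and matches the paper's proof: (i)$\Leftrightarrow$(ii) follows from sobriety, hence well-filteredness, together with Lemma~\ref{b4}. For (iii)$\Rightarrow$(ii), you write $\ua x\cap\ua y$ as the intersection of the filtered family $\{\ua a\cap\ua b\}$ with $a,b$ compact below $x,y$ and invoke well-filteredness, additionally spelling out the filteredness and the equality $\bigcap\mathcal F=\ua x\cap\ua y$ that the paper leaves implicit.
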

\begin{proof}
($\rmnum1\Leftrightarrow \rmnum2$) Algebraic domains are sober and hence well-filtered in the Scott topology. This equivalence is a special case of Lemma~\ref{b4}.

($\rmnum2\Rightarrow \rmnum3$) Obvious.

($\rmnum3\Rightarrow \rmnum2$) 
For $x, y \in L$, we realize that $\ua x\cap \ua y = \bigcap_{b\in \da y \cap K(L)}\bigcap_{a\in \da x \cap K(L)} (\ua a \cap \ua b)$, and the collection $\{\ua a\cap \ua b\mid a\in \da x \cap K(L), b\in \da y \cap K(L)\}$ is a filtered family of compact saturated sets, by assumption. Hence $\ua x\cap \ua y$ is compact since $L$ is well-filtered. 
\end{proof}

In the light of the above lemma, we could easily find domain structures that fail to be coherent.

\begin{example}\label{b7}
The following examples are algebraic domains but none of them is coherent:
\begin{itemize}
    \item Let $W(\overline{\mathbb{N}})=\overline{\mathbb{N}}\cup \left \{ a,b\right \}$ where $\overline{\mathbb{N}}$ is the anti-chain of natural numbers with the discrete order and $a$ and $b$ are two incomparable elements below $\overline{\mathbb{N}}$, then $W(\overline{\mathbb{N}})$ is not coherent. To see this, take the compact saturated subsets $\ua a$ and $\ua b$. Their intersection is $\overline{\mathbb{N}}$ and it is not a compact subset since for each $n\in \overline{\mathbb{N}}$, $n$ is a compact element and hence $\ua n = \{n\}, n\in \overline{\mathbb{N}}$ form a Scott open cover of $\overline{\mathbb{N}}$, and any finitely many of them fail to cover $\overline{\mathbb{N}}$. 
    \item Let $W(\overline{\mathbb{N}})_\bot$be the dcpo obtained by adding a least element $\bot$ to $W(\overline{\mathbb{N}})$, then $W(\overline{\mathbb{N}})_\bot$ is not coherent.
    \item Let $W(\overline{\mathbb{N}})_\bot^\top$ be the dcpo obtained by adding a top element \rotatebox[origin=c]{180}{$\bot$} to $W(\overline{\mathbb{N}})_{\bot}$, then $W(\overline{\mathbb{N}})_\bot^\top$ is not coherent.
\end{itemize}

\begin{figure}[H]
\begin{center}
\begin{tikzpicture}

    \coordinate[shape=circle, fill=white,draw=black, inner sep=1pt](z1) at (-3, 2);     
    \coordinate[shape=circle, fill=white,draw=black, inner sep=1pt] (z2) at (-2, 2);   
    \coordinate[shape=circle, fill=white,draw=black, inner sep=1pt] (z3) at (-1, 2);    
    \coordinate[shape=circle, fill=white,draw=black, inner sep=1pt] (a) at (-2, 0); 
    \coordinate[shape=circle, fill=white,draw=black, inner sep=1pt] (b) at (2, 0);  
    \coordinate[shape=circle, fill=white,draw=black, inner sep=1pt] (c) at (0, -1);  
    \coordinate[shape=circle, fill=white,draw=black, inner sep=1pt] (d) at (0, 4);  

 \coordinate[shape=circle, fill=black,draw=black, inner sep=0.5pt] (Zn1) at (-0.25, 2);  
 \coordinate[shape=circle, fill=black,draw=black, inner sep=0.5pt] (Zn2) at (0, 2);  
 \coordinate[shape=circle, fill=black,draw=black, inner sep=0.5pt] (Zn3) at (0.25, 2);  
 \coordinate[shape=circle, fill=white,draw=black, inner sep=1pt] (zn) at (1, 2);  
 \coordinate[shape=circle, fill=white,draw=black, inner sep=1pt] (zn+1) at (2, 2);  
 \coordinate[shape=circle, fill=white,draw=black, inner sep=1pt] (zn+2) at (3, 2);  
 \coordinate[shape=circle, fill=black,draw=black, inner sep=0.5pt] (Zn4) at (3.25, 2);  
 \coordinate[shape=circle, fill=black,draw=black, inner sep=0.5pt] (Zn5) at (3.5, 2);  
  \coordinate[shape=circle, fill=black,draw=black, inner sep=0.5pt] (Zn6) at (3.75, 2);  

    \draw (a) -- (z1);
    \draw (a) -- (z2);
    \draw (a) -- (z3);
    \draw (a) -- (zn);
    \draw (a) -- (zn+1);
    \draw (a) -- (zn+2);
    \draw (b) -- (z1);
    \draw (b) -- (z2);
    \draw (b) -- (z3);
    \draw (b) -- (zn);
    \draw (b) -- (zn+1);
    \draw (b) -- (zn+2);
    \draw (c) -- (a);
    \draw (c) -- (b);
    \draw (d) -- (z1);
    \draw (d) -- (z2);
    \draw (d) -- (z3);
    \draw (d) -- (zn);
    \draw (d) -- (zn+1);
    \draw (d) -- (zn+2);
    \node[below] at (a) {$ a $};
    \node[below] at (b) {$ b $};
    \node[below] at (c) {$ \bot $};
    \node[above] at (d) {$ \top $};
    \node[above] at (z1) {$ 0 $};
    \node[above] at (z2) {$ 1 $};
    \node[above] at (z3) {$ 2 $};
    \node[above] at (zn) {$ n $};
    \node[above] at (zn+1) {$ n+1 $};
    \node[above] at (zn+2) {$ n+2 $};

\end{tikzpicture}
\\Figure 2: $W(\overline{\mathbb{N}})_\bot^\top$ in Example~\ref{b7}
\end{center}
\end{figure}

\end{example}

\begin{definition}\label{webt}
For every anti-chain $E$ with infinitely many elements, we define
\begin{itemize}
    \item $W(E)=E\cup \left \{ a,b\right \}$ where $a,b$ are two incomparable elements below $E$,
    \item $W(E)_\bot$ to be the lifting of $W(E)$ by adding a bottom element $\bot$, and 
    \item $W(E)_\bot^\top$ by adding a top element \rotatebox[origin=c]{180}{$\bot$} to $W(E)_\bot$.
\end{itemize}  
\end{definition}

Now we arrive at one of our main results.

\begin{theorem}\label{b8}
Let $L$ be an algebraic domain with bottom and top elements $\bot$ and \rotatebox[origin=c]{180}{$\bot$}, then the following are equivalent:
\begin{enumerate}[(i)]
\item $L$ is not coherent;
\item $L$ has $\mathcal{K}(C)_\bot$ (Definition~\ref{a6}) or $W(E)_{\bot}^{\top}$(Definition~\ref{webt}) as a retract, where $C$ is a downward well-ordered chain without a bottom and $E$ is an anti-chain with infinitely many elements.
\end{enumerate}
\end{theorem}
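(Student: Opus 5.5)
The direction (ii)$\Rightarrow$(i) is immediate from Proposition~\ref{b6}, because coherence is inherited by retracts. It therefore suffices to check that $\mathcal{K}(C)_\bot$ and $W(E)_\bot^\top$ themselves fail to be coherent.

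For $W(E)_\bot^\top$, argue as in Example~\ref{b7}. We have $\ua a\cap\ua b=E\cup\{\top\}$. Each $e\in E$ is compact: any directed set with supremum at least $e$ must contain $e$ or $\top$, since the only directed sets that are not eventually constant live inside finite chains. Hence each $\ua e=\{e,\top\}$ is Scott open. These sets cover $E\cup\{\top\}$, but no finite subfamily does, because $E$ is infinite.

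For $\mathcal{K}(C)_\bot$, we have $\ua a\cap\ua b=C$. Since $C$ is downward well-ordered without a bottom, it contains a strictly decreasing sequence with no lower bound in $C$; more precisely, $C$ is coinitial with some limit ordinal reversed. Each $\ua c$ with $c\in C$ is Scott open: directed subsets of the chain $C$ have their supremum attained or approached from below, and $C^{op}$ being well-ordered means every ascending chain is eventually constant, so every element is compact. The sets $\{\ua c\mid c\in C\}$ form a directed open cover of $C$ with no single member covering it. Hence $C$ is not compact.

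For (i)$\Rightarrow$(ii), suppose $L$ is not coherent. By Lemma~\ref{b5}, there are compact elements $x,y$ such that $Q=\ua x\cap\ua y$ is not compact. Split into two cases.

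\emph{Case 1: $L$ is not bicomplete.} Then $L$ is sober, its unique minimal element $\bot$ is compact, and Theorem~\ref{a7} together with Remark~\ref{bicompelte_bot} yields $\mathcal{K}(C)_\bot$ as a retract.

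\emph{Case 2: $L$ is bicomplete.} Work with $M=\min(Q)$. Since $L^{op}$ is a dcpo, every filtered set has an infimum. Also $Q$ is Scott closed under filtered infima: each $\ua x$ is closed under infima of filtered subsets, because the infimum is still above $x$. A Zorn argument then shows that every element of $Q$ lies above some minimal element of $Q$, so $Q=\ua M$. If $M$ were finite, $Q$ would be compact. Hence $M$ is an infinite antichain.

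Now choose a countably infinite subset, or keep all of $E=M$ if that is more convenient; in algebraic domains the members of $M$ need not be compact, which is the subtlety here. Define the retraction $r:L\to W(E)_\bot^\top$ by
\[r(z)=\top \text{ if } z\in \ua F \text{ for some two-element } F\subseteq E;\qquad r(z)=e \text{ if } z\in\ua e \text{ and only } e \text{ from } E;\]
otherwise set $r(z)=a$ if $z\geq x$ but $z\notin Q$, $r(z)=b$ symmetrically, and $r(z)=\bot$ in the remaining cases. The section sends $\bot\mapsto\bot$, $a\mapsto x$, $b\mapsto y$, $e\mapsto e$ and $\top\mapsto\top$; this is clearly Scott-continuous, since the domain is finite-height and monotonicity suffices.

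\textbf{The main obstacle} is Scott continuity of $r$. It requires each $\ua e$, for $e\in E$, to be Scott open, i.e. each $e$ to be compact. It also requires $\ua\{x,y\}$ and the set of elements above two members of $E$ to be open. To secure this, I would refine the choice of $E$ to compact elements. Here is the plan for that. Since $Q$ is not compact, there is a directed open cover of $Q$ with no finite subcover. By algebraicity, refine it to a cover by sets $\ua k$ with $k$ compact and $k\geq x,y$, noting that $k\in Q$ may be assumed after using $x,y$ compact and taking suprema in a finite-set argument. Extract infinitely many pairwise incomparable compact $k_n\in Q$, using Ramsey-type extraction from the fact that no finite set of them covers $Q$; infinite descending chains are excluded by bicompleteness via the Case 1 analysis. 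Then set $E=\{k_n\}$, and define $r(z)=\top$ when $z$ lies above two distinct $k_n$, which is an open condition because the $k_n$ are compact. Monotonicity uses that the $k_n$ are pairwise incomparable, and directed suprema are respected because all defining sets are Scott open upper sets of compact elements.
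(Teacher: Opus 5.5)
Your direction (ii)$\Rightarrow$(i) is fine. One slip there: a downward well-ordered chain without bottom need not contain a coinitial sequence (take the reverse of $\omega_1$), but your directed-cover argument does not use it. Your Case~1 also works: you cite Theorem~\ref{a7} with Remark~\ref{bicompelte_bot}, which is legitimate because $L$ is sober and its only minimal element $\bot$ is compact. The paper instead splits on whether some maximal chain in $Q$ lacks a least element and builds that retraction by hand. Your Zorn argument giving $Q=\ua M$ with $M=\min(Q)$ infinite is also correct.

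The genuine gap is in Case~2, and it comes from misidentifying the obstacle. You claim the members of $M$ need not be compact, but this is false. Since $x,y$ are compact, $Q=\ua x\cap\ua y$ is Scott open. For $m\in M$, the set $\da m\cap K(L)$ is directed with supremum $m\in Q$, so it contains some compact $k\le m$ with $k\in Q$. Minimality then forces $m=k$. This one-line observation is exactly what the paper uses.

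With $E=M$ taken in full, your map $r$ is the paper's retraction. It is monotone because $M$ is an antichain with $\ua M=Q$. It is Scott-continuous because the preimages of the principal upsets of the finite-height target are Scott open: these preimages are $\ua x$, $\ua y$, $\ua e\cup\bigcup_{m\neq m'}(\ua m\cap\ua m')$, and $\bigcup_{m\neq m'}(\ua m\cap\ua m')$.

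The workaround you propose instead does not yield a retraction. Under your definition, any $z\in Q$ lying above no member of $E$ is sent to $\bot$. Yet $x\le z$ and $r(x)=a$; note $x\notin Q$, since otherwise $Q=\ua x$ would be compact. So $r$ fails to be monotone unless $\ua E=Q$. But in Case~2, an antichain $E\subseteq Q$ with $\ua E=Q$ must equal $\min(Q)$. Consequently, neither passing to a countably infinite subset of $M$ nor extracting some other antichain of compact $k_n$'s from a cover can work with this $r$.

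The extraction step is also unjustified in itself. A Ramsey argument only gives an infinite chain or an infinite antichain, and bicompleteness does not rule out infinite descending or ascending chains of compact elements in $Q$.

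The missing idea is therefore precisely the compactness of the minimal elements of $\ua x\cap\ua y$. Once you supply it, drop the refinement paragraph and keep $E=\min(Q)$ in full.
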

\begin{proof}
($\rmnum2\Rightarrow \rmnum1$) This can be obtained directly from Proposition \ref{b6}, since neither $\mathcal{K}(C)_\bot$ nor $W(E)_\bot^\top$ is coherent. 

($\rmnum1\Rightarrow \rmnum2$) Assume that $L$ is not coherent, then we use Lemma \ref{b5} to find $a, b\in K(L)$ such that $\ua a\cap \ua b$ is not compact in $L$. Now we consider the following two cases. 

$\mathbf{Case\ 1}$: There exists a maximal chain $C'$ in $\ua a\cap \ua b$, and $C'$ does not have a least element.  \textcolor{black}{This implies that $L$ is actually not bicomplete, and Remark~\ref{bicompelte_bot} enables us to find some $\mathcal K(C)_\bot$ as a retract in $L$. In order to keep the paper self-contained,  we speak the proof in full. }

\textcolor{black}{By applying the fact that every chain has a cofinal well-ordered subset
\cite[Section~17]{halmos1960naive}
to the dual chain $(C')^{op}$, we may choose a subset $C\subseteq C'$ such that
$C$ is downward well-ordered and, for each $c'\in C'$, there exists $c\in C$ with
$c\leq c'$.} We let $M = C \cup \{a,b,\bot\}$ and give $M$ the induced order from $L$. Clearly, $M$ is a copy of $\mathcal K(C)_\bot$. Figure~3 displays this copy\footnote{\color{black}Figure~3 illustrates the decomposition used in Case~1: the two labelled regions represent $\ua a$ and $\ua b$, and their common upper part contains the chosen cofinal chain $C$ onto which the retraction $r$ projects.}
. We define a map $r\colon L\to M$ as follows:

\begin{center}
$r(x)= \begin{cases}
\inf \left \{ c\in C\mid x\leqslant c  \right \}, & x\in \ua a \cap \ua b\\
a, & x\in \ua a \setminus \ua b\\
b, & x\in \ua b \setminus \ua a\\
\bot, & \mbox{otherwise.}
\end{cases}$
\end{center}

\begin{figure}[H]
\begin{center}
\begin{tikzpicture}
    \coordinate[shape=circle, fill=white,draw=black, inner sep=1pt] (A) at (-2, -2.5); 
    \coordinate[shape=circle, fill=white,draw=black, inner sep=1pt] (B) at (2, -2.5);  
    \coordinate (C) at (0, -1.5);  
    \coordinate[shape=circle, fill=white,draw=black, inner sep=1pt] (D) at (0, -3.833333333333333333);  

    \draw (A) -- (C) -- (B) -- (D) -- cycle;
    \draw (A) -- (D);
    \draw (B) -- (C);
    \draw (A) -- (-5, -0.5);
    \draw (B) -- (5, -0.5);
    \draw (C) -- (-3, 0);
    \draw (C) -- (3, 0);
    \node[below left] at (A) {$ a $};
    \node[below right] at (B) {$ b $};
    \node at (-3.05,-1.10) {$\ua a$};
    \node at ( 3.05,-1.10) {$\ua b$};
    \node[below] at (D) {$\bot$};

    \coordinate[shape=circle, fill=white,draw=black, inner sep=1pt](T) at (0, 2);     
    \coordinate[shape=circle, fill=white,draw=black, inner sep=1pt] (X1) at (0, 1.5);   
    \coordinate[shape=circle, fill=white,draw=black, inner sep=1pt] (X2) at (0, 1);    
    \coordinate[shape=circle, fill=white,draw=black, inner sep=1pt] (X3) at (0, 0.5);    
    \coordinate[] (...) at (0, 0.25); 
    \coordinate[] (...) at (0, 0); 
    \coordinate[] (...) at (0, -0.25); 
    
    \draw (T) -- (X1);
    \draw (X1) -- (X2);
    \draw (X2) -- (X3);
    \draw[dashed] (X3) -- (0, -0.25);

    \node[left] at (T) {$ $};
    \node[left] at (X1) {$ $};
    \node[left] at (X2) {$  $};
    \node[left] at (X3) {$ $};

\end{tikzpicture}
\\Figure 3: The situation in the proof of $\mathbf{Case\ 1}$
\end{center}
\end{figure}

We first show that $r(x)$ is well-defined. \textcolor{black}{The value of  $r(x)$ is the largest element of $C$ if $\left \{ c\in C\mid x\leqslant c  \right \}$ is empty}. Moreover, we notice that for each $x\in \ua a\cap \ua b$, $x$ cannot be below all elements in $C$, i.e., the set $\{c\in C \mid x \not\leq c\}$ is not empty; otherwise, {\color{black}$x\leq c'$ for every $c'\in C'$. Hence 
$C'\cup\{x\}$ would be a chain in $\ua a\cap\ua b$. By the maximality of 
$C'$, we would have $x\in C'$, and then $x$ would be the least element of $C'$, 
a contradiction to the fact that $C'$ does not have a least element.} Let $c_0 \in C$ be such that $x\not \leq c_0$. So $c_0$ is a lower bound of the set $\{c\in C \mid x \leq c\}$. {\color{black}Since $C$ is downward well-ordered, the set of all 
such lower bounds has a greatest element. This greatest lower bound is precisely $\inf\{c\in C \mid x\leq c\}$. }

Second, it is quite straightforward to see that the map $r$ is monotone. We prove that $r$ is even Scott-continuous. To this end, we take a directed subset $D$ of $L$. As $r$ is monotone, we know that $\sup r(D) \leq r(\sup D)$. We continue to prove $r(\sup D)\leq \sup r(D)$. That is just a case study: 
\begin{itemize}
    \item If $r(\sup D) = \bot $, {\color{black}then $r(\sup D)=\bot\leq \sup r(D)$.}
    \item If $r(\sup D) = a$, then by definition $\sup D \in \ua a \setminus \ua b$, and since $a$ is a compact element, we would know some $d'$ in $D$ is already in $\ua a \setminus \ua b$, and $r(d') = a$. So $r(\sup D) = a =r(d') \leq \sup r(D)$. The same arguments also apply to the case $r(\sup D) = b$. 
    \item If $r(\sup D) = \inf\{c\in C \mid \sup D \leq c\}$ is in $C$, then we know that $\sup D$ is in $\ua a\cap \ua b$, and \textcolor{black}{since $a,b$ are compact, after replacing $D$ by the cofinal directed subset $D\cap\ua a\cap\ua b$,} without loss of generality we may assume that $D\subseteq \ua a\cap \ua b$. 
    As $C$ is a downward well-ordered chain, $\sup r(D) = \max r(D)$ is in $C$, and we know $\max r(D) = r(d^*)$ for some $d^*\in D$. By monotonicity of $r$, $r(d) = r(d^*)$ for all $d\in D$ with $d\geq d^*$. \textcolor{black}{Put $m=r(d^*)$. For $d\in \ua d^*\cap D$, write $S_d=\{c\in C \mid d\leq c\}$. Then $\inf S_d=m$; hence, if $S_d$ has a least element, it must be $m$. Thus the following two cases are exhaustive.} The first case is that \textcolor{black}{$S_d$ has the least element $m$} for all $d\in \ua d^*\cap D$. This implies that $\sup D = \sup(\ua d^*\cap D)\leq \textcolor{black}{m}$, and by definition $r(\sup D) \leq \textcolor{black}{m} = \sup r(D)$. The second case is that \textcolor{black}{for some $d'\in \ua d^*\cap D$, the set $S_{d'}$ has no least element}. \textcolor{black}{Since $S_{d'}$ is an upper subset of $C$ with infimum $m$, we have $S_{d'}=\{c\in C \mid m<c\}$. For all $d\in D$ with $d\geq d'$, we have $S_d\subseteq S_{d'}$ and $\inf S_d=m$, hence $S_d=\{c\in C \mid d\leq c\}=\{c\in C \mid m<c\}$.} Therefore, $\{c\in C \mid \textcolor{black}{m}<c\} \subseteq \{c\in C \mid \sup D \leq c\}$, which implies that $r(\sup D) \leq \textcolor{black}{m} = \sup r(D)$, as required.

\end{itemize} 
So we have proved that $r$ is Scott-continuous. It is a retraction from $L$ to $M$, with the obvious section $s\colon M\to L$ defined as the inclusion of $M$ in to $L$. This inclusion itself is Scott-continuous since each element in $M$ is actually a compact element: $a, b$ and $\bot$ are obviously compact elements, and every element $c\in C \subseteq M$ is compact due to {\color{black}the fact} that $C$ is downward well-ordered.  

$\mathbf{Case\ 2}$: Every maximal chain $C$ in $\ua a\cap \ua b$ has a least  element.  

We observe that the least element of each maximal chain $C$ in $\ua a\cap \ua b$ 
is minimal in $\ua a\cap \ua b$ and is a compact element. The minimality of such elements is obvious. {\color{black}To see that they are compact, let $x$ be a minimal element of
$\ua a\cap\ua b$. Remember that $a,b$ are compact,
$\ua a \cap \ua b$ is Scott open. As $L$ is algebraic, $\da x\cap K(L)$ is directed and its supremum $x$ lies in the Scott open set $\ua a\cap\ua b$, then there exists
$y\in\da x\cap K(L)\cap(\ua a\cap\ua b)$. By the
minimality of $x$ in $\ua a\cap\ua b$, we have $x=y$ is compact.} We denote all the minimal elements of $\ua a\cap \ua b$ as $Z$. Since each element is contained in some maximal chain, every $x\in \ua a \cap \ua b$ is actually above some minimal and compact element $z\in Z$. Moreover, as $\ua a \cap \ua b$ is not a compact subset, $Z$ is actually an infinite subset of $L$.

\begin{figure}[H]
\begin{center}
\begin{tikzpicture}
    \coordinate[shape=circle, fill=white,draw=black, inner sep=1pt] (A) at (-2, -2.5); 
    \coordinate[shape=circle, fill=white,draw=black, inner sep=1pt] (B) at (2, -2.5);  
    \coordinate (C) at (0, -1.5);  
    \coordinate[shape=circle, fill=white,draw=black, inner sep=1pt] (D) at (0, -3.83333333333333);  

    \draw (A) -- (C) -- (B) -- (D) -- cycle;
    \draw (A) -- (D);
    \draw (B) -- (C);
    \draw (A) -- (-5, -0.5);
    \draw (B) -- (5, -0.5);
    \draw (C) -- (-3, 0);
    \draw (C) -- (3, 0);
    \node[below left] at (A) {$ a $};
    \node[below right] at (B) {$ b $};
    \node at (-3.05,-1.10) {$\ua a$};
    \node at ( 3.05,-1.10) {$\ua b$};
    \node[below] at (D) {$\bot$};

    \coordinate[shape=circle, fill=white,draw=black, inner sep=1pt](T) at (0, 2);     
    \coordinate[shape=circle, fill=white,draw=black, inner sep=1pt] (X1) at (-1, 1);   
    \coordinate[shape=circle, fill=white,draw=black, inner sep=1pt] (X2) at (1, 1);    
    \coordinate[shape=circle, fill=white,draw=black, inner sep=1pt] (Z1) at (-2, 0); 
    \coordinate[shape=circle, fill=white,draw=black, inner sep=1pt] (Z2) at (-1, 0); 
    \coordinate[shape=circle, fill=white,draw=black, inner sep=1pt] (Z3) at (-0.5, 0); 
    \coordinate[shape=circle, fill=white,draw=black, inner sep=1pt] (Zn) at (1, 0);  

 \coordinate[shape=circle, fill=black,draw=black, inner sep=0.5pt] (Z4) at (0.15, 0);  
 \coordinate[shape=circle, fill=black,draw=black, inner sep=0.5pt] (Z5) at (0.25, 0);  
  \coordinate[shape=circle, fill=black,draw=black, inner sep=0.5pt] (Z6) at (0.35, 0);  
 \coordinate[shape=circle, fill=black,draw=black, inner sep=0.5pt] (Zn1) at (1.35, 0);  
 \coordinate[shape=circle, fill=black,draw=black, inner sep=0.5pt] (Zn2) at (1.45, 0);  
 \coordinate[shape=circle, fill=black,draw=black, inner sep=0.5pt] (Zn3) at (1.55, 0);  

  \coordinate[shape=circle, fill=black,draw=black, inner sep=0.5pt] (x4) at (0.15, 1);  
 \coordinate[shape=circle, fill=black,draw=black, inner sep=0.5pt] (x5) at (0.25, 1);  
  \coordinate[shape=circle, fill=black,draw=black, inner sep=0.5pt] (x6) at (0.35, 1);  
 \coordinate[shape=circle, fill=black,draw=black, inner sep=0.5pt] (xn1) at (1.35, 1);  
 \coordinate[shape=circle, fill=black,draw=black, inner sep=0.5pt] (xn2) at (1.45, 1);  
 \coordinate[shape=circle, fill=black,draw=black, inner sep=0.5pt] (xn3) at (1.55, 1);  

    \draw[dashed] (T) -- (X1);
    \draw[dashed] (T)  -- (Zn);
    \draw[dashed] (T) -- (Z2);
    \draw[dashed] (T) -- (Z3);

    \draw (X2) -- (Z3);
    \draw (X1) -- (Z1);
    \draw (X2) -- (Zn);

    \node[above] at (T) {$\rotatebox[origin=c]{180}{$\bot$}$};
    \node[above] at (X1) {$ x_1 $};
    \node[above] at (X2) {$ x_2 $};
    \node[below] at (Z1) {$ z_1 $};
    \node[below] at (Z2) {$ z_2 $};
    \node[below] at (Z3) {$ z_3 $};
    \node[below right] at (Zn) {$ z_n $};

\end{tikzpicture}
\\Figure 4: The situation in the proof of $\mathbf{Case\ 2}$
\end{center}
\end{figure}

Now, we classify points in $\ua a\cap \ua b$ into two categories $F_1$ and $F_2$, where each element $x\in F_1$ is greater than or equal to {\color{black}exactly} one element $z_x \in Z$, and $F_2$ consists of the {\color{black}remaining} elements of $\ua a\cap \ua b$. 
We define $N = Z\cup \{\bot, a, b, \top\}$ and induce the order of $L$ onto it. 
Clearly $N$ is a copy of $\textcolor{black}{W(Z)^\top_\bot}$. Figure~4 displays this copy\footnote{\color{black}Figure~4 illustrates the situation in Case~2, where the elements $z_i\in Z$ are the minimal elements of $\ua a\cap\ua b$, and hence each $z_i$ lies above both $a$ and $b$.
The dashed lines from the $z_i$'s to $\top$ indicate only the order relations $z_i<\top$, not necessarily covering relations; there may be other elements between $z_i$ and $\top$, whose possible order relations are not specified in the picture.}. Now consider the following map $r\colon L\to N$:

\begin{center}
$r(x)= \begin{cases}
\top, & x\in F_2 \\
z_x, & x\in F_1 \\
a, & x\in \ua a\setminus \ua b\\
b, & x\in \ua b\setminus \ua a\\
\bot, & \mbox{otherwise.}
\end{cases}$
\end{center}

We prove that $r$ is Scott-continuous. It is monotone {\color{black}by definition}. Now we take a directed subset $D\subseteq L$ and prove that $r(\sup D) \leq \sup r(D)$. We consider the following cases:
\begin{itemize}
    \item If $r(\sup D) = \bot$, {\color{black}then $r(\sup D)=\bot\leq \sup r(D)$.}
    \item If $r(\sup D) = a$, then $\sup D$ is in $\ua a\setminus \ua b$, and since $a$ is compact some $d'\in D$ has to be in $\ua a\setminus \ua b$, so we know that $r(d') = a$ and $r(\sup D)= a =r(d') \leq \sup r(D)$. The same reasoning applies to the case $r(\sup D) = b$.
    \item If $r(\sup D) = z$ for $z\in Z$, then we know that $\sup D$ is in $\ua a\cap \ua b$ and $z$ is the unique element in $Z$ that is below $\sup D$. Hence some element $d\in D$ is already {\color{black}larger than} $z$, as $z$ is compact. For that $d$, we have $z\leq d \leq \sup D$. So $d\in F_1$ and $z=z_d$. Hence $r(d) = z$, and $r(\sup D) \leq \sup r(D)$.
    \item If $r(\sup D) = \top$, then that means $\sup D$ is above at least two distinct elements $z_1, z_2$ in~$Z$. Again, since $z_1$ and $z_2$ are compact and $D$ is directed, there exists a $d' \in D$ such that $z_1\leq d'$ and $z_2\leq d'$. Hence, $r(\sup D) = \top = r(d') \leq \sup r(D)$.
\end{itemize}
So we have proved that $r$ is Scott-continuous. It follows that $r:L\rightarrow N$ is a Scott-continuous retraction with the canonical inclusion of $N$ into $L$ as the corresponding section. {\color{black}Since $N$ has finite height, every directed subset of $N$ has a largest element; hence each element in $N$ is actually compact and the inclusion is Scott-continuous.}
\end{proof}

\begin{remark}\label{b9}
\
\begin{enumerate}
    \item {\color{black}For any dcpo $L$, $L^\top$ is coherent if and only if $L$ is coherent if and only if $L_\bot$ is coherent. Indeed, the compact saturated subsets of $L_\bot$ are precisely $L_\bot$ and those of $L$, while the nonempty compact saturated subsets of $L^\top$ are precisely $Q\cup\{\top\}$ with $Q\in \mathcal Q(L)$. Also, the element $\top$ is a compact element in $L^\top$, and we have that $L$ is algebraic if and only if $L^\top$ is algebraic if and only if $L_\bot$ is algebraic. }
    This means that for any given algebraic domain $L$, we only need to consider whether $L^\top_\bot$ has some $\mathcal{K}(C)_\bot$ or $W(E)_\bot^\top$ as a retract or not. That will suffice to  reflect the coherence of $L$. 
    \item {\color{black}Theorem~\ref{b8} holds for bounded countable domains since countable domains are algebraic; see~\cite{jia2026separating}.}
\end{enumerate}
\end{remark}

Recall that the Lawson topology on a dcpo is generated, as closed sets, by the Scott closed subsets and principal upper sets $\ua x$. We know the following about Lawson compactness from the literature. 

\begin{theorem}\label{b10}\cite{jia2018meet, xi2017well}
Let $L$ be a dcpo. Then $L$ is Lawson-compact if and only if $L$ is well-filtered, compact and coherent in the Scott topology.
\end{theorem}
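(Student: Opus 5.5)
The statement to prove is Theorem~\ref{b10}: $L$ is Lawson compact if and only if $\Sigma L$ is well-filtered, compact and coherent. I plan to prove the two directions separately, working throughout with the subbasic Lawson closed sets. These are the Scott closed sets and the principal filters $\ua x$.

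\emph{Forward direction.} Assume $L$ is Lawson compact. Scott compactness is immediate, since the Scott topology is coarser than the Lawson topology. For coherence and well-filteredness, the key observation is that every Scott compact saturated set $Q$ is Lawson closed. Granting this, $Q_1\cap Q_2$ is Lawson closed in a Lawson compact space, hence Lawson compact, hence Scott compact; this gives coherence. Likewise a filtered intersection of Scott compact saturated sets is Lawson closed. If such an intersection lies in a Scott open $U$, the sets $Q\setminus U$ form a filtered family of Lawson closed sets with empty intersection. By Lawson compactness one of them is empty, which is exactly well-filteredness.

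\emph{Main obstacle.} The hard step is showing that Scott compact saturated sets are Lawson closed. The tool I would use is that the Lawson topology makes $L$ a pospace-like object: the order relation should be closed in $\lambda\times\lambda$ whenever $L$ is Lawson compact. Then the upper set of a Lawson compact set is Lawson closed.

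I would try to deduce this from the standard fact, from \cite{gierz03} and \cite{jia2018meet}, that a Lawson compact dcpo is meet-continuous-free enough for $x\not\le y$ to be separated. Concretely, if $x\not\le y$, then $x\notin\da y$, and $\da y$ is Scott closed. So $L\setminus\da y$ is a Lawson open neighbourhood of $x$, and we need a Lawson open neighbourhood of $y$ disjoint from its upper part. I expect this to require citing the literature: the result is stated in \cite{jia2018meet} and \cite{xi2017well}, and I would lean on it rather than reprove it.

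Once the order is closed, take $Q$ Scott compact saturated. Its Scott-compactness together with $Q=\ua Q$ shows that $Q$ is the intersection of the Scott open sets containing it. Each such Scott open $U\supseteq Q$ can be shrunk, using Scott compactness and finite covers, to a set of the form $\ua F$ with $F$ finite (when a basis of that form is available), and $\ua F$ is Lawson closed. In general I would replace this by the closed-order argument above.

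\emph{Converse direction.} Assume $\Sigma L$ is well-filtered, compact and coherent. By Alexander's subbase lemma it suffices to show that every cover of $L$ by subbasic Lawson opens has a finite subcover. Such a cover has the form $\{U_i\}\cup\{L\setminus\ua x_j\}$, with the $U_i$ Scott open. Suppose there is no finite subcover.

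Let $U=\bigcup U_i$. Then $\bigcap_j \ua x_j\subseteq U$. For finite $J_0$, the set $\bigcap_{j\in J_0}\ua x_j$ is Scott compact saturated by coherence, applied inductively starting from the compact sets $\ua x$. These finite intersections form a filtered family of compact saturated sets whose intersection lies in $U$. By well-filteredness, some finite $\bigcap_{j\in J_0}\ua x_j\subseteq U$. That set is compact, so finitely many $U_i$ cover it, and together with the finitely many $L\setminus\ua x_j$, $j\in J_0$, they cover $L$. This is a contradiction.

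The one remaining case is an empty family of $x_j$. There $L=U$, and Scott compactness of $L$ finishes the argument. This direction is routine; the forward direction carries the difficulty.
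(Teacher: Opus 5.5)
Your converse direction is correct and is the standard argument. You use Alexander's subbase lemma, then coherence to make the finite intersections $\bigcap_{j\in J_0}\ua x_j$ compact saturated, then well-filteredness to push one of them into $U=\bigcup_i U_i$, and finally Scott compactness. Scott compactness in the forward direction is also fine.

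\textbf{The gap.} The genuine gap is the ``key observation'' that every Scott compact saturated set is Lawson closed. In your forward direction both coherence and well-filteredness rest on it, and it is never established. The tool you propose for it is false in general. If $\leq$ were closed in $\lambda\times\lambda$, the diagonal ${\leq}\cap{\geq}$ would be closed, so the Lawson topology would be Hausdorff. But every complete lattice is Lawson compact, and the Lawson topology of a complete lattice need not be Hausdorff.

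Even granting a closed order, your inference does not apply. ``The upper set of a Lawson compact set is Lawson closed'' says nothing about $Q$, which is only known to be Scott compact. What a closed order would actually give is the following. For $q\in Q$ and $y\notin Q$, take disjoint Lawson open sets $O_q\ni q$ (upper, hence Scott open) and $P_q\ni y$. Then Scott compactness of $Q$ yields a Lawson neighbourhood of $y$ missing $Q$. So your sketch works when the Lawson topology makes $L$ a pospace, e.g.\ for quasicontinuous domains, but not in general. Your fallback via neighbourhoods of the form $\ua F$ likewise presupposes a quasicontinuity-type hypothesis.

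\textbf{What the forward direction really needs.} You do not need the general claim for coherence. Each $\ua x\cap\ua y$ is Lawson closed, hence Lawson compact, hence Scott compact. Once $\Sigma L$ is known to be well-filtered, Lemma~\ref{b4} then gives coherence. So the whole forward direction reduces to the implication ``Lawson compact $\Rightarrow$ $\Sigma L$ well-filtered''. This is the nontrivial result behind \cite{xi2017well}, and any proof of it must work without Hausdorff separation. Saying you would ``lean on the literature'' for it means your proposal leaves exactly the hard step unproved.

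For comparison, the paper gives no proof of Theorem~\ref{b10}; it only cites \cite{jia2018meet, xi2017well}. Citing is therefore consistent with the paper. But if you cite, drop the closed-order mechanism rather than presenting it as the argument, and derive coherence as above.
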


\begin{corollary}\label{al-lawson}
Let $L$ be an algebraic domain with bottom and top elements $\bot$ and \rotatebox[origin=c]{180}{$\bot$}, then the following are equivalent:
\begin{enumerate}[(i)]
\item $L$ is not Lawson compact;
\item $L$ is not coherent;
\item $L$ has $\mathcal{K}(C)_\bot$ or $W(E)_\bot^\top$ as a retract, where $C$ is a downward well-ordered chain without a bottom and $E$ is an anti-chain with infinite number of elements.
\end{enumerate}
\end{corollary}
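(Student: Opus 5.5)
The plan is to obtain the corollary by combining Theorem~\ref{b10} with Theorem~\ref{b8}. The equivalence (ii)$\Leftrightarrow$(iii) is exactly Theorem~\ref{b8}, so only (i)$\Leftrightarrow$(ii) needs an argument. Theorem~\ref{b10} states that $L$ is Lawson compact if and only if $\Sigma L$ is well-filtered, compact and coherent. For the $L$ in the statement, the first two conditions always hold, so Lawson compactness reduces to coherence.

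First I will check that $\Sigma L$ is well-filtered. By Proposition~\ref{b2}, every continuous domain is sober in the Scott topology, hence well-filtered. Algebraic domains are continuous, so this covers $L$.

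Next I will check that $\Sigma L$ is compact. $L$ has a least element $\bot$. Any Scott open cover of $L$ contains some open set $U$ with $\bot\in U$. Since $U$ is an upper set, $U=\ua\bot=L$, so one member of the cover already suffices. Hence $L$ is compact in the Scott topology.

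Substituting these two facts into Theorem~\ref{b10} shows that $L$ is Lawson compact if and only if $L$ is coherent, which gives (i)$\Leftrightarrow$(ii). Then (ii)$\Leftrightarrow$(iii) follows from Theorem~\ref{b8} under its same hypotheses (algebraic, with $\bot$ and $\top$). I expect no real obstacle. The only points needing care are recording why the well-filteredness and compactness hypotheses of Theorem~\ref{b10} are automatic here, and noting that the existence of $\top$ is used only through Theorem~\ref{b8}.
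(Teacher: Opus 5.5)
Your proposal is correct and follows the paper's own argument. You check that an algebraic domain with $\bot$ is well-filtered (via sobriety, Proposition~\ref{b2}) and compact in the Scott topology (any open set containing $\bot$ is all of $L$), so Theorem~\ref{b10} reduces Lawson compactness to coherence, and Theorem~\ref{b8} then gives (ii)$\Leftrightarrow$(iii); you simply spell out the two routine verifications that the paper asserts in a single line.
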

\begin{proof}
Note that an algebraic domain with a least element is well-filtered and compact in the Scott topology. So Lawson compactness is equivalent to coherence by Theorem~\ref{b10}, and the statement is then a straightforward consequence of Theorem~\ref{b8}. 
\end{proof}

\section{Weakly Hausdorff continuous domains}
In this section, we try to generalize the conclusions in Theorem \ref{b8} to the setting of continuous domains. Unfortunately, a satisfactory result like Theorem \ref{b8} \textcolor{black}{has not been achieved yet}. In the \textcolor{black}{continuous setting}, we will have to assume additional properties on the domains, namely a property called \textcolor{black}{weak Hausdorffness} initially introduced by Keimel and Lawson in~\cite{keimel05}, as the title of the section indicates. 

\begin{definition}\cite{keimel05}\label{c4}
A topological space $X$ is \emph{weakly Hausdorff} if for all compact saturated subsets $Q_1,Q_2$ of $X$, and for every open neighborhood $W$ of $Q_1\cap Q_2$, there are two open neighborhoods $U$ of $Q_1$ and $V$ of $Q_2$ such that $W=U\cap V$.\\
A domain $L$ is said to be \emph{weakly Hausdorff} if $\Sigma L$ is a weakly Hausdorff space.
\end{definition}

The following lemma is Lemma 6.6 in~\cite{keimel05}, which follows from a standard compactness argument. 

\begin{lemma}\cite{keimel05}\label{c5}
The following are equivalent for a topological space $X$:
\begin{enumerate}[(i)]
\item $X$ is weakly Hausdorff;
\item for all points $x_1,x_2\in X$, and for every open neighborhood $W$ of $\ua x_1\cap \ua x_2$, there exists an open set $U_1$ containing $x_1$ and an open set $U_2$ containing $x_2$ such that $U_1\cap U_2 \subseteq W$. 
\item {\color{black}for all points $x_1,x_2\in X$, and for every open neighborhood $W$ of $\ua x_1\cap \ua x_2$, there exists an open set $U_1$ containing $x_1$ and an open set $U_2$ containing $x_2$ such that $U_1\cap U_2=W$.}
\end{enumerate}
\end{lemma}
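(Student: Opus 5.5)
We prove the cycle of implications (iii)$\Rightarrow$(ii)$\Rightarrow$(i)$\Rightarrow$(iii), writing $\uparrow$ for saturation in the specialization order of $X$.

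\textbf{(iii)$\Rightarrow$(ii).} This is immediate, since $U_1\cap U_2=W$ in particular gives $U_1\cap U_2\subseteq W$.

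\textbf{(ii)$\Rightarrow$(i).} This is the standard compactness argument. Let $Q_1,Q_2$ be compact saturated and $W$ an open set containing $Q_1\cap Q_2$.
\begin{enumerate}[(1)]
\item Fix $x_1\in Q_1$. For every $x_2\in Q_2$ we have $\ua x_1\cap\ua x_2\subseteq Q_1\cap Q_2\subseteq W$, so (ii) yields open sets $U^{x_2}\ni x_1$ and $V^{x_2}\ni x_2$ with $U^{x_2}\cap V^{x_2}\subseteq W$.
\item By compactness of $Q_2$, finitely many $V^{x_2^{(1)}},\dots,V^{x_2^{(n)}}$ cover $Q_2$. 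Put $U_{x_1}=\bigcap_i U^{x_2^{(i)}}$ and $V_{x_1}=\bigcup_i V^{x_2^{(i)}}$. Then $x_1\in U_{x_1}$, $Q_2\subseteq V_{x_1}$, and $U_{x_1}\cap V_{x_1}\subseteq W$.
\item Now vary $x_1$. By compactness of $Q_1$, finitely many $U_{x_1^{(j)}}$ cover $Q_1$. Put $U=\bigcup_j U_{x_1^{(j)}}\supseteq Q_1$ and $V=\bigcap_j V_{x_1^{(j)}}\supseteq Q_2$; then $U\cap V\subseteq W$.
\item To obtain equality, replace $U$ by $U\cup W$ and $V$ by $V\cup W$. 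Their intersection is $(U\cap V)\cup W=W$, and they are still open neighborhoods of $Q_1$ and $Q_2$.
\end{enumerate}

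\textbf{(i)$\Rightarrow$(iii).} Apply (i) with $Q_1=\ua x_1$ and $Q_2=\ua x_2$. Principal upper sets are compact saturated, since any open set containing $x$ contains $\ua x$. Then (i) gives open $U_1\supseteq\ua x_1\ni x_1$ and $U_2\supseteq\ua x_2\ni x_2$ with $U_1\cap U_2=W$, which is (iii).

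The only step with real content is the double compactness argument in (ii)$\Rightarrow$(i). The one point needing care there is getting equality rather than inclusion, which is handled by adjoining $W$ in step (4).
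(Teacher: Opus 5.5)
Your proof is correct and is the standard double compactness argument the paper refers to; the paper gives no proof of its own and defers to Keimel--Lawson's Lemma~6.6. Your step (4), replacing $U$ and $V$ by $U\cup W$ and $V\cup W$, correctly upgrades the inclusion $U\cap V\subseteq W$ to the equality $U\cap V=W$ required by the paper's definition of weak Hausdorffness.
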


\begin{example}
\
    \begin{enumerate}
        \item Every poset in its Alexandroff topology (the topology consisting of its all upper sets) is weakly Hausdorff (see \cite{goubault2023weakly}). 
        \item Coherent locally compact sober spaces are weakly Hausdorff (see \cite{keimel05}). Since domains are locally compact and sober in the Scott topology, coherent domains are weakly Hausdorff. 
        \item {\color{black}The algebraic domain $W(\mathbb N)^\top_\bot$ in Example 3.2 is weakly Hausdorff: As each element of $W(\mathbb N)^\top_\bot$ is compact, the Scott topology coincides with the Alexandroff topology on $W(\mathbb N)^\top_\bot$, and Lemma 4.2 applies by taking $U=\ua x$ and $V=\ua y$ for every open neighbourhood $W$ containing $\ua x\cap\ua y$; however, it fails to be coherent.}
    \end{enumerate}
\end{example}

    \textcolor{black}{Recall that a topological space $X$ is \emph{Lindel\"of} if every open cover of $X$ has a countable subcover and a topological space $X$ is \emph{hereditarily Lindel\"of} if every subspace of $X$ is Lindel\"of.}

\begin{example}\label{e1}Let $L=\left \{ a_i\mid i\in \mathbb{N}\cup \left \{ \omega \right \}\right \}\cup\left \{ b_i\mid i\in \mathbb{N}\cup \left \{ \omega \right \}\right \}\cup \overline{\mathbb{N}}$. We define an order $\leqslant $ on $L$ as: $x\leqslant y$ in $L$ if and only if,
\begin{itemize}
    \item $x=a_i,\ y=a_j\in\left \{ a_i\mid i\in \mathbb{N}\cup \left \{ \omega \right \}\right \}$ and $i\leqslant j$ in $\mathbb{N}$ or $j=\omega$;
    \item $x=b_i,\ y=b_j\in\left \{ b_i\mid i\in \mathbb{N}\cup \left \{ \omega \right \}\right \}$ and $i\leqslant j$ in $\mathbb{N}$ or $j=\omega$;
    \item $x=a_i\in \left \{ a_i\mid i\in \mathbb{N}\cup \left \{ \omega \right \}\right \}$ and $i\ne \omega$, $y=j\in \overline{\mathbb{N}}$ where $i \leqslant j$;
    \item $x=b_i\in \left \{ b_i\mid i\in \mathbb{N}\cup \left \{ \omega \right \}\right \}$ and $i\ne \omega$, $y=j\in \overline{\mathbb{N}}$ where $i \leqslant j$.
\end{itemize}
The order on $L$ can be depicted as in Figure~5. Then $L$ is a second countable algebraic domain hence {\color{black} $\Sigma L$ is hereditarily Lindel\"of but it is not weakly Hausdorff.}

\begin{figure}[H]
\begin{center}
\begin{tikzpicture}
\coordinate[shape=circle, fill=white,draw=black, inner sep=1pt](a0) at (0, 0); 
\coordinate[shape=circle, fill=white,draw=black, inner sep=1pt](a1) at (0, 1); 
\coordinate[shape=circle, fill=white,draw=black, inner sep=1pt](a2) at (0, 2); 
\coordinate[shape=circle, fill=white,draw=black, inner sep=1pt](an) at (0, 3); 
\coordinate[shape=circle, fill=white,draw=black, inner sep=1pt](aw) at (0, 4);  
			
\coordinate[shape=circle, fill=white,draw=black, inner sep=1pt](b0) at (1, 0); 
\coordinate[shape=circle, fill=white,draw=black, inner sep=1pt](b1) at (1, 1); 
\coordinate[shape=circle, fill=white,draw=black, inner sep=1pt](b2) at (1, 2); 
\coordinate[shape=circle, fill=white,draw=black, inner sep=1pt](bn) at (1, 3); 
\coordinate[shape=circle, fill=white,draw=black, inner sep=1pt](bw) at (1, 4); 
\coordinate[shape=circle, fill=white,draw=black, inner sep=1pt](0) at (3, 4); 
\coordinate[shape=circle, fill=white,draw=black, inner sep=1pt](1) at (4, 4); 
\coordinate[shape=circle, fill=white,draw=black, inner sep=1pt](2) at (5, 4); 
\coordinate[shape=circle, fill=white,draw=black, inner sep=1pt](n) at (7, 4); 		
	\draw (a0) -- (a1);
	\draw (a1) -- (a2);
	\draw[dashed] (a2) -- (an);
	\draw[dashed] (an) -- (aw);
			
	\draw (b0) -- (b1);
	\draw (b1) -- (b2);
	\draw[dashed] (b2) -- (bn);
	\draw[dashed] (bn) -- (bw);
			
	\draw[dashed] (2) -- (n);
	\draw[dashed] (n) -- (9,4);
			
	\draw (a0) -- (0);
	\draw (b0) -- (0);
	\draw (a1) -- (1);
	\draw (b1) -- (1);	
	\draw (a2) -- (2);	        
	\draw (b2) -- (2);	
	\draw (an) -- (n);
	\draw (bn) -- (n);		
			
	\node[left] at (0,0) {$a_0$};
	\node[left] at (0,1) {$a_1$};
	\node[left] at (0,2) {$a_2$};
	\node[left] at (0,3) {$a_n$};
	\node[left] at (0,4) {$a_w$};
			
	\node[left] at (1,0) {$b_0$};
	\node[below left] at (1,1) {$b_1$};
	\node[left] at (1,2) {$b_2$};
	\node[left] at (1,3) {$b_n$};
	\node[left] at (1,4) {$b_w$};
			
	\node[above] at (3,4) {$0$};
	\node[above] at (4,4) {$1$};
	\node[above] at (5,4) {$2$};
	\node[above] at (7,4) {$n$};
\end{tikzpicture}
\\Figure 5: $L$ in Example \ref{e1}
\end{center}
\end{figure}
\end{example}
\begin{proof}
It can be easily seen that all $a_n, b_n, n\in \mathbb N$ are compact elements, so $L$ is \textcolor{black}{an} algebraic domain that is second countable in its Scott topology.\\
To see it is not weakly Hausdorff, we note that $\ua a_{\omega}\cap \ua b_{\omega}=\emptyset$. For every Scott open set $U$ of $a_{\omega}$ and Scott open set $V$ of $b_{\omega}$, there exist  $a_{n_1}\in U$ and $b_{n_2}\in V$. Without loss of generality we assume $n_1\leq n_2$. Then for every $n'\in \overline{\mathbb{N}}$ with $n_2\leqslant n'$, we have $n'\in U\cap V$, which means $L$ is not weakly Hausdorff.
\end{proof}

\begin{theorem}\label{c6}
Let $L$ be a hereditarily Lindel\"of and weakly Hausdorff continuous domain with bottom and top elements $\bot$ and $\top$, then the following are equivalent:
\begin{enumerate}[(i)]
\item $L$ is not coherent;
\item $L$ has $\mathcal{K}(C)_\bot$ or $W(E)_\bot^\top$ as a retract, where $C$ is a downward well-ordered chain without a bottom and $E$ is an anti-chain consisting of {\color{black}a} countably infinite number of elements.
\end{enumerate}
\end{theorem}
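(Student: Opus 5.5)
The direction (ii)$\Rightarrow$(i) is immediate from Proposition~\ref{b6}, exactly as in Theorem~\ref{b8}, because neither $\mathcal K(C)_\bot$ nor $W(E)_\bot^\top$ is coherent. For (i)$\Rightarrow$(ii), continuous domains are sober and hence well-filtered (Proposition~\ref{b2}). So Lemma~\ref{b4} gives $a,b\in L$ with $Q=\ua a\cap\ua b$ not compact. Then $a,b$ are incomparable, since otherwise $Q$ would be a principal filter and hence compact. In particular $a,b\notin Q$.

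I would first dispose of the non-bicomplete case. $L$ is sober and has least element $\bot$, which is compact because it is the only minimal element. So if $L$ is not bicomplete, Remark~\ref{bicompelte_bot} (via Theorem~\ref{a7}) yields $\mathcal K(C)_\bot$ as a retract. From now on assume $L$ is bicomplete. Then every chain in $Q$ has an infimum, and this infimum lies above $a$ and $b$, so it lies in $Q$. By Zorn's lemma every point of $Q$ lies above a minimal element of $Q$. Let $Z=\min Q$, so that $Q=\ua Z$.

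Next I would extract a countable antichain $E=\{z_n\mid n\in\mathbb N\}\subseteq Z$ together with Scott open sets $V_n\ni z_n$ such that $z_m\notin V_n$ for $m\neq n$, and such that $W=\bigcup_n V_n$ is an open neighbourhood of $Q$. The idea is to cover $Q$ by open sets of the form $\dua w$ with $w\ll z$ for $z\in Z$ (Proposition~\ref{b2}). Since $\Sigma L$ is hereditarily Lindel\"of, the subspace $Q$ is covered by countably many of them, $\dua w_n$. Because $Q$ is not compact, no finite subfamily covers $Q$. Passing to a subsequence, I would try to arrange, inductively, that each chosen $\dua w_n$ contains a point $z_n\in Z$ not covered by the earlier ones, and then shrink the later neighbourhoods so that they avoid the earlier $z_m$. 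This separation step is where I expect the main difficulty. The $z$'s are automatically pairwise incomparable, being minimal, but a basic neighbourhood of $z_n$ may a priori contain infinitely many $z_m$. I would handle this by a diagonal refinement: at stage $n$, choose $w_n\ll z_n$ close enough to $z_n$ that $\dua w_n$ misses $z_0,\dots,z_{n-1}$, which is possible since those are finitely many points not above $z_n$. Symmetrically, I would pick $z_n$ outside $\dua w_0\cup\dots\cup\dua w_{n-1}$, which non-compactness permits. One must still ensure that the remaining cover, consisting of the $\dua w_n$ together with the leftover members, is an open neighbourhood of $Q$. I would take $W$ to be the union of all the countably many chosen sets, with the leftover members added to suitable $V_n$ in a way that does not reintroduce other $z_m$.

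Now apply weak Hausdorffness in the form of Lemma~\ref{c5}(iii) to $W\supseteq\ua a\cap\ua b$. This gives opens $U\ni a$ and $V\ni b$ with $U\cap V=W$. Let $N=E\cup\{\bot,a,b,\top\}\cong W(E)_\bot^\top$ with the induced order, and define $r\colon L\to N$ by
\[
r(x)=\begin{cases}\top,& x\in V_n\cap V_m\text{ for some }n\neq m,\\ z_n,& x\in V_n\text{ for exactly one }n,\\ a,& x\in U\setminus W,\\ b,& x\in V\setminus W,\\ \bot,& \text{otherwise.}\end{cases}
\]
The map $r$ is monotone because all the sets involved are upper sets. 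It is Scott-continuous because the preimage of each upper set of $N$ is a union of the open sets $U$, $V$, $V_n$ and $V_n\cap V_m$; for instance $r^{-1}(\ua a)=U$ and $r^{-1}(\ua z_n)=V_n$. The inclusion $s\colon N\to L$ is Scott-continuous since $N$ has finite height. Finally $r\circ s=\id_N$: we have $r(z_n)=z_n$ by the separation property, $r(a)=a$ since $a\in U\setminus W$, and similarly $r(b)=b$. Also $r(\top)=\top$, because $\top$ lies in every nonempty upper set and hence in two distinct $V_n$.
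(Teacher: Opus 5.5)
\textbf{There is a genuine gap in the construction of the separating open sets $V_n$.}

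Your overall architecture matches the paper's: the non-bicomplete case via Remark~\ref{bicompelte_bot}, minimal elements $Z$ via bicompleteness, a countable subcover from hereditary Lindel\"ofness, weak Hausdorffness giving $U\cap V=W$, and the same five-valued retraction. But the step you flag as the main difficulty is left open, and the mechanism you sketch for it would fail.

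Replacing a cover member by a smaller basic neighbourhood $\dua w_n$ of $z_n$ that misses $z_0,\dots,z_{n-1}$ also discards non-minimal points of $Q$. So $\bigcup_n\dua w_n$ need not cover $Q$. The leftover members of the Lindel\"of subcover, which you want to add back, can each contain infinitely many of the chosen $z_m$. Removing infinitely many ideals $\da z_m$ does not preserve Scott openness in general. Hence the claim that the leftovers can be absorbed ``without reintroducing other $z_m$'' is exactly what remains unproved. Without $W\supseteq Q$ you cannot apply Lemma~\ref{c5}, and the retraction has nothing to stand on.

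The missing idea is to excise, at each stage, only finitely many \emph{minimal} points, using a Scott closed set:
\begin{enumerate}[(1)]
\item Take increasing finite unions $U_1\subseteq U_2\subseteq\cdots$ of the countable subcover with $\bigcup_n U_n\supseteq Q$, and set $U_0=\varnothing$.
\item Pick $z_n\in(U_n\setminus U_{n-1})\cap Z$. This is possible because no finite union of open sets covers $Z$.
\item Put $V_n=U_n\setminus\da\{z_1,\dots,z_{n-1}\}$.
\end{enumerate}
Then:
\begin{enumerate}[(a)]
\item $V_n$ is Scott open, because $\da$ of a finite set is Scott closed.
\item $z_n\in V_n$, because distinct minimal elements are incomparable.
\item For $m>n$, $z_m\notin V_n$, since $z_m\notin U_{m-1}\supseteq U_n$. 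For $m<n$, $z_m\notin V_n$ by construction.
\item $\bigcup_n V_n\supseteq Q$. By minimality, the only points of $Q$ lying in $\da\{z_1,\dots,z_{n-1}\}$ are the $z_i$ themselves, and each lies in its own $V_i$.
\end{enumerate}

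With this in place the rest of your argument goes through, modulo two small fixes.
\begin{enumerate}[(1)]
\item $r^{-1}(\ua z_n)$ equals $V_n\cup\bigcup_{k\neq m}(V_k\cap V_m)$, not $V_n$. It is still open, so continuity is unaffected.
\item $r(a)=a$ and $r(\bot)=\bot$ require checking that $a,b\notin W$ and $\bot\notin U\cup V$. If $a\in V_n$, then $Q\subseteq\ua a\subseteq V_n$. If $\bot\in U$, then $U=L$, so $b\in V=W$ and again $Q\subseteq\ua b\subseteq V_n$ for some $n$. Either way this contradicts the separation property.
\end{enumerate}
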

\begin{proof}
($\rmnum2\Rightarrow \rmnum1$) Since neither $\mathcal{K}(C)_\bot$ nor $W(E)_\bot^\top$ is coherent, this can be obtained directly from Proposition \ref{b6}.

($\rmnum1\Rightarrow \rmnum2$) Assume that $L$ is not coherent. Since a continuous domain is well-filtered, by using Lemma~\ref{b4} we are able to find $a,b\in L$ such that $\ua a\cap \ua b$ is not compact in $L$. We consider the following two cases: 

$\mathbf{Case\ 1}$: $L$ is not bicomplete. In this case $L$ has some $\mathcal{K}(C)_\bot$ where $C$ is a downward well-ordered chain without a bottom as a retract by using Remark~\ref{bicompelte_bot}.

$\mathbf{Case\ 2}$: $L$ is bicomplete. In this case every element in $\ua a\cap \ua b$ is above some minimal element in $\ua a\cap \ua b$, and  since $\ua a\cap \ua b$ is not compact, those minimal elements in $\ua a\cap \ua b$ form an infinite set. We denote $Z$ as the set consisting of all minimal elements in $\ua a\cap \ua b$, hence $\ua a\cap \ua b = \ua Z$. 

{\color{black}Put $A=\ua a\cap\ua b$. Since $A$ is not compact, there is a family
$\mathcal V$ of Scott open subsets of $L$ which covers $A$ and has no finite
subcover. As $\Sigma L$ is hereditarily Lindelöf, the subspace $A$ is Lindelöf;
hence $\mathcal V$ has a countable subcover of $A$, say $\{V_i\}_{i\in\mathbb N}$.
This countable cover still has no finite subcover, for otherwise $\mathcal V$
would have a finite subcover of $A$.} Without loss of generality, we may assume that each $V_i$ \textcolor{black}{intersects} $Z$; otherwise, we could fix some $V_j$ with $V_j \cap Z \not= \emptyset$, and add $V_j$ to each $V_i$ and then update our sequence of opens. 
From $V_i$, we could construct $U_i, i\in \mathbb N$, such that for every $i\in \mathbb N$,  $U_{i+1}$ is strictly larger than $U_{i}$, $U_{i+1}\cap Z$ is strictly larger than $U_{i}\cap Z$ and $\bigcup_{i=1}^\infty U_i = \bigcup_{i=1}^\infty V_i$. Indeed, we let $U_1 = V_1$ and assume that $U_i$ is defined, then we let ${\color{black}U_{i+1} = U_i \cup \bigcup_{j=1}^{k(i)} V_j}$, where $k(i)$ is the least number such that $V_{k(i)}$ contains an element in $Z\setminus U_i$. This can be done since all $V_i$'s cover $Z$ and any finitely many of them fail to do so. So we also know that $U_i$ fails to cover $Z$ for any $i\in \mathbb N$. 

Now we let $W_1 = U_1$ and fix some element $z_1 \in Z\cap U_1$, and since $U_2 \cap Z$ is strictly larger than $U_1\cap Z$, we fix some $z_2 \in (U_2\setminus U_1)\cap Z$ and let $W_2 = U_2\setminus \da z_1$. Obviously we have $z_2\in W_2$ and $z_2$ differs from $z_1$. We proceed like this and find some element $z_n\in U_n$ different from $z_1, z_2, \cdots z_{n-1}$ and let $W_n = U_n \setminus \da \{z_1, z_2, \cdots z_{n-1}\}$. By induction, we get a sequence of Scott open sets $W_i$ and a sequence of points $z_i\in Z$ with $z_i\in W_i,  i\in \mathbb N$. By the construction, we know that $z_i\in W_j$ if and only $i = j$. As a result, all $W_i$'s are incomparable and they together cover $\ua a\cap \ua b = \ua Z$.

We let $M = \{\top, \bot, a, b \} \cup \{z_i \mid i\in \mathbb N\}$ and give it the induced order from $L$. Then clearly $M$ is a copy of $W(E)_\bot^\top$, where $E$ is countably infinite.

Now we realize that $\ua a\cap \ua b = \ua Z \subseteq \bigcup_{i=1}^\infty W_i$, and then we use the fact that $L$ is weakly Hausdorff to find Scott open sets $U_a$ and $U_b$ such that $a\in U_a, b\in U_b$ and $U_a\cap U_b = \bigcup_{i=1}^\infty W_i$ {\color{black}by Lemma~\ref{c5}.}

Consider the following map $r\colon L\to M$.

\begin{center}
$r(x)= \begin{cases}
\bot, & x\notin U_a\cup U_b  \\
a, &x\in U_a\setminus U_b \\
b, &x\in U_b\setminus U_a \\
z_i, &x\in W_i \setminus \bigcup_{j\not= i} W_j \\
\top, & \mbox{otherwise.}
\end{cases}$
\end{center}

We remark that $r(x) = \top$ if and only if $x$ is in at least two different $W_i$'s. It is quite routine to check that $r$ is a monotone map. We prove that $r$ is Scott continuous. To this end, we let $D$ be a directed subset of $L$ with supremum $\sup D$ and prove $r(\sup D) \leq \sup r(D)$. We consider the following cases:
\begin{itemize}
    \item $r(\sup D) = \bot$. This implies that $\sup D \notin U_a \cup U_b$. {\color{black}Then $r(\sup D)=\bot\leq \sup r(D)$.}
    \item $r(\sup D) = a$. This implies that $\sup D \in U_a\setminus U_b$. Since $U_a$ is Scott open, there exists some $d\in D$ such that $d\in U_a$. {\color{black}This} $d$ is not in $U_b$, since otherwise, $\sup D$ would have been in $U_b$, a contradiction to $\sup D \in U_a\setminus U_b$. So we know \textcolor{black}{$d \in U_a\setminus U_b$, and $r(\sup D) = a = r(d) \leq  \sup r(D)$.} The same reasoning applies to the case $r(\sup D) = b$. 
    \item $r(\sup D) = z_i$ for some $i\in \mathbb N$. Then we know that $\sup D\in W_i \setminus \bigcup_{j\not= i} W_j$, hence some $d\in D$ is in $W_i \setminus \bigcup_{j\not= i} W_j$. So $r(\sup D) = z_i = r(d) \leq \sup r(D)$. 
    \item $r(\sup D) = \top$. In this final case, we would know that $\sup D$ is at least in two different $W_i$'s, say $\sup D\in W_j \cap W_k$ for some $j, k\in \mathbb N$. Then some $d\in D$ is in $W_j \cap W_k$ since  $W_j \cap W_k$ is Scott open. Hence by definition $r(d) = \top$ and $r(\sup D) = \sup r(D) = \top$. 
\end{itemize}
Now we see that $r$ is actually a Scott-continuous retraction, as the canonical embedding $s$ of $M$ into $L$ is monotone and hence Scott-continuous because $M$ is of finite height. Obviously, $r\circ s = \id_M$. 
\end{proof}

The theorem also applies to more special cases listed in the following remark.

\begin{remark}~
\begin{enumerate}
    \item The above theorem also holds if we assume $L$ is weakly Hausdorff and is of countably infinite width, that is, every subset of $L$ consisting of mutually incomparable elements is countable. 
    \item Since second countable spaces are hereditarily Lindel\"of, {\color{black}the above theorem also holds if hereditary Lindel\"ofness is replaced by second countability of $\Sigma L$.}
\end{enumerate}
\end{remark}

\begin{corollary}\label{con-lawson}
Let $L$ be a hereditarily Lindel\"of weakly Hausdorff continuous domain with bottom and top elements $\bot$ and \rotatebox[origin=c]{180}{$\bot$}, then the following are equivalent:
\begin{enumerate}[(i)]
\item $L$ is not Lawson-compact;
\item $L$ is not coherent;
\item $L$ has $\mathcal{K}(C)_\bot$ or $W(E)_\bot^\top$ as a retract, where $C$ is a downward well-ordered chain without a bottom and $E$ is an anti-chain with infinite number of elements.
\end{enumerate}
\end{corollary}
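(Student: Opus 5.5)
The plan is to obtain the corollary by chaining Theorem~\ref{b10} with Theorem~\ref{c6}, in the same way Corollary~\ref{al-lawson} was derived from Theorem~\ref{b8}.

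\textbf{Step 1: (i)$\Leftrightarrow$(ii).} By Proposition~\ref{b2}, a continuous domain is sober and hence well-filtered in the Scott topology. Since $L$ has a least element $\bot$, the Scott space $\Sigma L$ is compact: every Scott open cover of $L$ has a member containing $\bot$, and that member is $L$ itself. So the hypotheses ``well-filtered'' and ``compact'' in Theorem~\ref{b10} hold automatically. That theorem then reduces Lawson compactness of $L$ to coherence of $\Sigma L$, which gives the equivalence of (i) and (ii).

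\textbf{Step 2: (ii)$\Rightarrow$(iii).} Here I would invoke Theorem~\ref{c6} directly. It provides a retract of the form $\mathcal{K}(C)_\bot$, or of the form $W(E)_\bot^\top$ with $E$ countably infinite. A countably infinite anti-chain is in particular infinite, so (iii) follows. Note that Theorem~\ref{c6} is the only place where the hereditarily Lindel\"of and weakly Hausdorff assumptions are used.

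\textbf{Step 3: (iii)$\Rightarrow$(ii).} This direction follows from Proposition~\ref{b6}: if $L$ were coherent, every Scott-retract of $L$ would be coherent. But $\mathcal{K}(C)_\bot$ and $W(E)_\bot^\top$ are not coherent, for any downward well-ordered chain $C$ without bottom and any infinite anti-chain $E$, so $L$ cannot be coherent.

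\textbf{Main point to check: non-coherence of the two forbidden shapes.} For $W(E)_\bot^\top$, every element is compact because the poset has finite height. So $\ua a\cap\ua b = E\cup\{\top\}$ is covered by the Scott open sets $\ua e$ for $e\in E$, and no finite subfamily covers it since $E$ is infinite. For $\mathcal{K}(C)_\bot$, every element of $C$ is compact. So $\ua a\cap\ua b=C$ is covered by the sets $\ua c$ for $c\in C$, and these form a directed family since $C$ is a chain. Because $C$ has no least element, no single $\ua c$ covers $C$. In both cases $\ua a\cap\ua b$ is not compact, so the paper's earlier assertion that these shapes are not coherent is justified. With this in hand, the proof is a direct assembly of the cited results.
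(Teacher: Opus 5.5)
Your proposal is correct and follows the paper's route, which is the one-line combination of Theorem~\ref{b10} and Theorem~\ref{c6}; there, well-filteredness and Scott compactness hold automatically for a continuous domain with a bottom element. You also prove (iii)$\Rightarrow$(ii) directly via Proposition~\ref{b6} together with a check that both shapes are non-coherent. That step is needed because Theorem~\ref{c6} only covers countably infinite $E$ while (iii) allows any infinite $E$, and it fills in a detail the paper leaves implicit.
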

\begin{proof}
This is straightforward from Theorem \ref{b10} and Theorem \ref{c6}.
\end{proof}

{\color{black}
\section{A function space consequence}
In the theory of domains, one usually has to analyze function spaces of domains in order to identify Cartesian closed subcategories of domains, where Cartesian closedness is a necessary property for semantic categories of higher-order programming languages. 

For dcpo's $X,Y$, the function space
$[X\to Y]$ consists of all Scott-continuous maps from $X$ to $Y$,
ordered pointwise: $f\leq g$ if and only $f(x)\leq g(x)$ for all $x\in X$. In this section, a function space $[X\to Y]$ is always endowed with the Scott topology induced by the pointwise order.
Recall that a dcpo $X$ is called an \emph{$L$-dcpo} if every principal ideal
$\da x$ is a complete lattice in its induced order\cite{jung88c}. We shall use the following
results. 

\begin{theorem}\cite[Theorem~4.4.2]{jia2018meet}\label{thm:jia4.4.2}
Let $D$ be a locally compact sober dcpo and $E$ a pointed bicomplete sober dcpo. If $D$ is not coherent and $E$ is not an $L$-dcpo, then the function space $[D\to E]$ is not meet-continuous.
\end{theorem}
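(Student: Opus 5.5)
The plan is to exhibit, inside $[D\to E]$, an element $f$ and a directed family $\{g_i\}_{i\in I}$ with $f\leq\sup_i g_i$ but $f\notin\cl_\sigma(\da f\cap\da\{g_i\})$. Both $f$ and the $g_i$ will be built as finite suprema of step functions $U\searrow e'$, meaning the map sending the Scott open set $U$ to $e'$ and everything else to $\bot_E$. Such maps are Scott-continuous because $E$ is pointed.

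First I extract the two defects. Since $D$ is locally compact and sober, it is well-filtered, so by Lemma~\ref{b4} there are $x,y\in D$ with $Q=\ua x\cap\ua y$ not compact. Hence there is a directed family of Scott opens $\{U_i\}_{i\in I}$ with $Q\subseteq\bigcup_i U_i$ and $Q\not\subseteq U_i$ for every $i$.

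On the side of $E$, some principal ideal $\da e$ is not a complete lattice. Because $E$ is bicomplete and pointed, $\da e$ is a bicomplete dcpo with a least element, so the failure must come from a pair $u,v\leq e$ without a least upper bound in $\da e$. By bicompleteness and Zorn's lemma (applied to filtered sets of upper bounds), every upper bound of $\{u,v\}$ below $e$ lies above a minimal upper bound. There are therefore two distinct minimal upper bounds $m_1\neq m_2$ of $\{u,v\}$ below $e$.

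Next comes the construction, using local compactness of $D$ as in Proposition~\ref{b2}. I choose open sets $O_x\ni x$ and $O_y\ni y$, together with compact saturated sets $K_x\subseteq O_x$ and $K_y\subseteq O_y$ whose interiors still contain $x$ and $y$. I then set
\[
f=(\intt K_x\searrow u)\vee(\intt K_y\searrow v)\vee\cdots .
\]
The difficulty is that $f$ must be defined on $\intt K_x\cap\intt K_y$, where it needs a value above both $u$ and $v$. The idea is to use the $U_i$ there. Let
\[
g_i=\bigl(U_i\searrow e\bigr)\vee(\text{step functions giving }u\text{ near }x\text{ and }v\text{ near }y),
\]
which is directed in $i$, and let $f$ take the value $m_1$ on part of $\ua x\cap\ua y$ and $m_2$ elsewhere. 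This choice is arranged so that $f\leq\sup_i g_i$, with $\sup_i g_i$ equal to $e$ on all of $Q$. The aim is that any $h\leq f,g_i$ agrees with $f$ only on $U_i$ and drops strictly below $m_1$ or $m_2$ at some point of $Q\setminus U_i$.

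Finally, I separate $f$ from $\da f\cap\da\{g_i\}$ by a Scott open subset of $[D\to E]$. A natural candidate is the set of all $h$ with $h(q)\not\leq w$ for some $q$ in the non-compact part of $Q$ and suitable $w$. To see that this set is open, I would use compactness of $K_x\cap K_y$-type sets, or alternatively a sobriety argument in $E$.

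The main obstacle is this last step, together with making the values of $f$ on $Q$ consistent and continuous. Non-compactness of $Q$ has to be converted into a failure of the meet-continuity closure condition, and the two minimal upper bounds $m_1,m_2$ are exactly what prevents approximating $f$ from below inside $\da g_i$. My first attempt would be to take $f=(\intt K_x\cap\intt K_y\searrow m_1)$ on a neighborhood of one point $q_0\in Q$ and $m_2$ at a point outside each $U_i$, and then check directly that the Scott open set $\{h : h(q_0)\not\leq m_2\}$ contains $f$ but misses $\da f\cap\da\{g_i\}$.
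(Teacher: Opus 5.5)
The paper does not prove this statement; it imports it from Jia's thesis and uses it only for $D=E=W(A)^\top_\bot$. Your sketch therefore has to stand on its own, and it does not. The decisive steps are left open, and the ones you do specify fail.

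First, $g_i$ is not well defined. The step functions giving $u$ near $x$ and $v$ near $y$ have no join in $[D\to E]$. On the overlap $O_x\cap O_y\supseteq Q$, and in particular at the points of $Q\setminus U_i$, a join would have to take the value $u\vee v$, which is exactly what does not exist. The choice made there is the whole mechanism: $f$ should be $m_1$ on the entire overlap, and $g_i$ should be $e$ on $U_i$ and $m_2$ on the rest of the overlap. Then every $h\leq f,g_i$ lies below both $m_1$ and $m_2$ on $Q\setminus U_i$.

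Second, $f\leq\sup_i g_i$ must hold on all of the overlap, not just on $Q$, so you need $O_x\cap O_y\subseteq\bigcup_i U_i$. Obtaining such neighbourhoods from $Q\subseteq\bigcup_i U_i$ is precisely weak Hausdorffness (Lemma~\ref{c5}), which is not assumed. Local compactness must instead be used at the level of open sets. A Hofmann--Mislove argument shows that a locally compact sober space is coherent as soon as $\ll$ on $\mathcal O(D)$ is multiplicative ($U\ll V_1$ and $U\ll V_2$ imply $U\ll V_1\cap V_2$). So non-coherence gives opens $U\ll V_1$, $U\ll V_2$ with $U\not\ll V_1\cap V_2$. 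Concretely, there are compact saturated $K_j$ with $U\subseteq K_j\subseteq V_j$, and a directed family of opens $W_i$ with $\bigcup_i W_i=V_1\cap V_2$ and $K_1\cap K_2\not\subseteq W_i$ for all $i$. With $V_1,V_2,W_i$ in place of $O_x,O_y,U_i$, the maps $f,g_i$ just described are Scott-continuous, $\{g_i\}$ is directed, and $f\leq\sup_i g_i$.

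Third, your separating set cannot work. We have $q_0\in U_i$ for large $i$, and if $f=m_1$ on an open $N\ni q_0$, then $(N\cap U_i\searrow m_1)$ lies both in $\da f\cap\da g_i$ and in $\{h : h(q_0)\not\leq m_2\}$. The same objection defeats any open set that constrains only the values at finitely many points of $Q$.

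The non-compactness has to be read off on $K_1,K_2$, through $\mathcal U=\{h : h(K_1)\subseteq Y_1,\ h(K_2)\subseteq Y_2\}$ with $Y_j$ Scott open in $E$. This set is Scott open, since directed suprema in $[D\to E]$ are pointwise and each $K_j$ is compact. It contains $f$ when $u\in Y_1$ and $v\in Y_2$. For $h\in\mathcal U\cap\da f\cap\da g_i$ and $q\in(K_1\cap K_2)\setminus W_i$, it gives $h(q)\in Y_1\cap Y_2\cap\da m_1\cap\da m_2$. So you additionally need $Y_1\cap Y_2\cap\da m_1\cap\da m_2=\emptyset$.

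If $u,v$ are compact, as in $W(A)^\top_\bot$, then $Y_1=\ua u$ and $Y_2=\ua v$ work by minimality of $m_1\neq m_2$. A Zorn-chosen pair need not allow this, however. Suppose $u=\sup_n u_n$, $v=\sup_n v_n$, and there are pairwise incomparable $w_n\geq u_n,v_n$ below $m_1$ and $m_2$; this happens in a suitable algebraic, bicomplete, pointed $E$. Then every such $Y_1\cap Y_2$ contains some $w_n$. Already for $D=W(\overline{\mathbb N})^\top_\bot$, with $V_1=K_1=\ua a$, $V_2=K_2=\ua b$ and $W_i=\{0,\dots,i,\top\}$, the resulting $f$ is the supremum of a chain in $\da f\cap\da G$, so this pair $f,G$ is no witness at all. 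For a general sober $E$ the non-$L$ configuration therefore has to be chosen more carefully, or another separating set found. Your proposal does not address this.
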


\begin{lemma}\cite[Proposition~2.7.10]{jia2018meet}\label{lem:function-space-retract}
Let $P$ be a Scott-continuous retract of a dcpo $L$. Then $\End{P}$ is a
Scott-continuous retract of $\End{L}$.
\end{lemma}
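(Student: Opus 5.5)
The statement to prove is Lemma~\ref{lem:function-space-retract}: if $P$ is a Scott-continuous retract of $L$, then $\End{P}$ is a Scott-continuous retract of $\End{L}$.

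Fix Scott-continuous maps $s\colon P\to L$ and $r\colon L\to P$ with $r\circ s=\id_P$. I would take the standard conjugation maps
\[
S\colon \End{P}\to\End{L},\quad S(f)=s\circ f\circ r,
\qquad
R\colon \End{L}\to\End{P},\quad R(g)=r\circ g\circ s .
\]
Both are well defined, because composites of Scott-continuous maps are Scott-continuous. They form a retraction pair:
\[
R(S(f))=r\circ s\circ f\circ r\circ s=\id_P\circ f\circ \id_P=f,
\]
so $R\circ S=\id_{\End{P}}$.

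It remains to show that $S$ and $R$ are Scott-continuous for the pointwise orders. This is the only step needing care, though it is routine.

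Monotonicity is immediate. If $f\leq f'$ pointwise, then $s(f(r(x)))\leq s(f'(r(x)))$ for every $x$, because $s$ is monotone. The same argument works for $R$, using that $r$ is monotone.

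For directed suprema, recall that in a function space $\End{X}$ of a dcpo, a directed family $\{f_i\}$ has a supremum computed pointwise: $(\sup_i f_i)(x)=\sup_i f_i(x)$. This pointwise supremum is again Scott-continuous by the usual interchange of directed suprema.

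Using this, for a directed family $\{f_i\}\subseteq\End{P}$ we get
\[
S(\sup_i f_i)(x)=s\bigl(\sup_i f_i(r(x))\bigr)=\sup_i s(f_i(r(x)))=\bigl(\sup_i S(f_i)\bigr)(x),
\]
where the middle equality holds because $s$ preserves directed suprema. Likewise,
\[
R(\sup_i g_i)(y)=r\bigl(\sup_i g_i(s(y))\bigr)=\sup_i R(g_i)(y),
\]
because $r$ preserves directed suprema.

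So $S$ and $R$ preserve directed suprema, hence are Scott-continuous, i.e.\ continuous between $\Sigma\End{P}$ and $\Sigma\End{L}$. This gives $\End{P}$ as a Scott-retract of $\End{L}$ in the sense of Definition~\ref{a3}.
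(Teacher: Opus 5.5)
Your proof is correct. The conjugation maps $f\mapsto s\circ f\circ r$ and $g\mapsto r\circ g\circ s$, together with the fact that directed suprema in function spaces are computed pointwise, are the standard argument for this fact. The paper does not reprove the lemma; it only cites \cite[Proposition~2.7.10]{jia2018meet}, so there is no in-paper proof to compare against.
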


The following standard fact is well-known. 

\begin{lemma}\cite{gierz03}\label{lem:continuous-retract}
Let $P$ be a Scott-continuous retract of a continuous dcpo $L$. Then $P$ is continuous.
\end{lemma}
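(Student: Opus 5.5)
This lemma is the standard fact that continuity passes to Scott-continuous retracts. The plan is to transport way-below approximations along the section and retraction.

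Let $s\colon P\to L$ and $r\colon L\to P$ be Scott-continuous with $r\circ s=\id_P$. First, $P$ is a dcpo by hypothesis, since retracts are taken among dcpo's here. Fix $x\in P$. I would show that the set
\[
A_x=\{r(u)\mid u\in L,\ u\ll s(x)\}
\]
is directed, consists of elements way below $x$, and has supremum $x$. Directedness and the supremum come from continuity of $L$ and Scott-continuity of $r$. The set $\dda s(x)$ is directed with supremum $s(x)$, so its monotone image $A_x$ is directed. Moreover $\sup A_x=r(\sup\dda s(x))=r(s(x))=x$.

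The key step is showing $r(u)\ll x$ whenever $u\ll s(x)$. Let $D\subseteq P$ be directed with $x\leq\sup D$. Then $s(D)$ is directed, and $s(x)\leq s(\sup D)=\sup s(D)$ by Scott-continuity of $s$. Since $u\ll s(x)$, there is $d\in D$ with $u\leq s(d)$. Applying the monotone map $r$ gives $r(u)\leq r(s(d))=d$. Hence $r(u)\ll x$.

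Finally, I would pass from $A_x$ to $\dda x$. We have $A_x\subseteq\dda x$ and $\sup A_x=x$, so $x=\sup\dda x$ because every element of $\dda x$ lies below $x$. For directedness of $\dda x$, take $y_1,y_2\ll x$. Since $A_x$ is directed with supremum $x$, each $y_i$ lies below some element of $A_x$. By directedness of $A_x$, both $y_1$ and $y_2$ lie below a common $w\in A_x\subseteq\dda x$. Thus $\dda x$ is directed, and $P$ is continuous.

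The only delicate point is the key step, specifically the use of Scott-continuity of $s$ to transport the directed supremum. The rest is routine.
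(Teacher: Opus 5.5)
Your proof is correct and complete. The paper gives no argument of its own for this lemma, only a citation to Gierz et al., and yours is the standard textbook proof: show $u\ll s(x)\Rightarrow r(u)\ll x$ using Scott-continuity of $s$, obtain $x=\sup r(\dda s(x))$ using Scott-continuity of $r$, and then upgrade the directed family $r(\dda s(x))$ of way-below approximants to directedness of $\dda x$ with supremum $x$.
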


The following proposition shows that the forbidden structures we find for coherence behave badly with function space construction. 

\begin{proposition}\label{prop:function-spaces-not-continuous}
Let $C$ be a downward well-ordered chain without a bottom element and let $A$
be an infinite anti-chain. Then neither $\End{\mathcal{K}(C)_\bot}$ nor
$\End{W(A)^\top_\bot}$ is continuous.
\end{proposition}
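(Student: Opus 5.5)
The plan is to argue by contradiction, feeding the three results just recalled into Theorem~\ref{thm:jia4.4.2}. Let $P$ be either $\mathcal{K}(C)_\bot$ or $W(A)^\top_\bot$, and suppose $\End{P}$ is continuous. Every continuous dcpo is meet-continuous, so it suffices to show that $\End{P}$ is \emph{not} meet-continuous. I will apply Theorem~\ref{thm:jia4.4.2} with $D=E=P$. For this I need four facts about $P$: it is locally compact and sober, it is not coherent, it is pointed, bicomplete and sober, and it is not an $L$-dcpo.

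\textbf{Local compactness, sobriety, pointedness, non-coherence.} Both $\mathcal{K}(C)_\bot$ and $W(A)^\top_\bot$ are algebraic domains. For $\mathcal{K}(C)_\bot$, every element is compact because $C$ is downward well-ordered, as noted in the proof of Theorem~\ref{b8}. For $W(A)^\top_\bot$, the poset has finite height, so every element is compact. Hence both are locally compact and sober by Proposition~\ref{b2}, and both are pointed. Non-coherence is exactly what was used in the direction (ii)$\Rightarrow$(i) of Theorem~\ref{b8}: in either structure, $\ua a\cap\ua b$ is not compact.

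\textbf{Bicompleteness.} This is where the two structures differ, and I expect it to be the main obstacle.

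For $W(A)^\top_\bot$ there is no difficulty. Filtered sets in a finite-height poset have least elements, so the poset is bicomplete. It is also not an $L$-dcpo: the principal ideal $\da\top$ is the whole poset, and there $a,b$ have the infinite antichain $A$ as their upper bounds with no least one, so $\da\top$ is not a complete lattice. Theorem~\ref{thm:jia4.4.2} then shows that $\End{W(A)^\top_\bot}$ is not meet-continuous, hence not continuous.

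For $\mathcal{K}(C)_\bot$, bicompleteness fails, since $C$ itself is filtered with no infimum. So Theorem~\ref{thm:jia4.4.2} cannot be applied with $E=P$. The first thing I would try is to keep $D=\mathcal{K}(C)_\bot$ and replace $E$ by a small pointed bicomplete non-$L$-dcpo that is a retract of $\mathcal{K}(C)_\bot$. A natural candidate is $\{\bot,a,b,c\}$ with $c\in C$ chosen as the top of $C$ if $C$ has one, collapsing all of $C$ to $c$. This collapse is Scott-continuous, but a top of $C$ need not exist.

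The alternative I would then try is a direct argument. Use Lemma~\ref{lem:function-space-retract} to reduce to an explicit witness, and show by hand that $\End{\mathcal{K}(C)_\bot}$ is not meet-continuous. The intended witness uses a directed family of step functions that approach $C$ from above, together with a map sending $a,b$ to incomparable values. Meet-continuity would force their infimum below to exist, and it should fail to.

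If both attempts stall, I would argue via Lemma~\ref{lem:continuous-retract} instead. The task would be to exhibit a retract of $\End{\mathcal{K}(C)_\bot}$ that is manifestly non-continuous, for instance one containing a copy of a non-continuous function space of the form $[\mathcal{K}(C)_\bot\to\{\bot,a,b,\top\}]$, where the downward-unbounded chain prevents way-below approximations. Of these steps, the bicompleteness obstacle for $\mathcal{K}(C)_\bot$ is the one I expect to need the most care.
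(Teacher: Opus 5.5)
Your treatment of $W(A)^\top_\bot$ is the paper's own argument: Theorem~\ref{thm:jia4.4.2} with $D=E=Q$, using finite height for bicompleteness and the missing join of $a,b$ in $\da\top$ for the non-$L$ property. It is correct. The gap is the $\mathcal{K}(C)_\bot$ half. There you leave three unfinished attempts, and none of them can succeed.

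\begin{itemize}
\item \textbf{Reduction to non-meet-continuity.} This is hopeless here. Write $P=\mathcal{K}(C)_\bot$. It is a meet-semilattice, and every directed subset of $P$ has a maximum. So $\End{P}$ is the set of all monotone self-maps, with pointwise binary meets. Its directed suprema are pointwise and attained at each point: $(\sup D)(x)=d_x(x)$ for some $d_x\in D$. Hence $f\wedge\sup D=\sup_{d\in D}(f\wedge d)$, which for a dcpo with binary meets is equivalent to the paper's definition. So $\End{P}$ \emph{is} meet-continuous. Neither Theorem~\ref{thm:jia4.4.2} nor your ``direct'' meet-continuity witness can work.
\item \textbf{Collapsing retract.} Contrary to what you say, the retraction exists: $C$ is downward well-ordered, so it always has a greatest element. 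But $\{\bot,a,b,c\}$ is the four-element Boolean lattice, hence an $L$-dcpo. In fact no retract of $P$ is both bicomplete and non-$L$. For a non-$L$ retract, its copy under the section must contain $a$, $b$ and a subchain of $C$ without least element. That subchain is a filtered set with no infimum.
\item \textbf{Function-space retract.} $[P\to\{\bot,a,b,\top\}]\cong\sigma(P)\times\sigma(P)$ is a continuous lattice, because $\Sigma P$ is locally compact. So it witnesses nothing.
\end{itemize}

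The missing ingredient is the one the paper uses: Jung's theorem that a dcpo $L$ with $\End{L}$ continuous must be bicomplete. In $\mathcal{K}(C)_\bot$ the chain $C$ is filtered with no infimum, so this settles the case at once.

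If you want a self-contained argument instead, attack the identity directly. Suppose $\End{P}$ were continuous. Directedness of $\dda\mathrm{id}$ together with compactness of $a$ and $b$ gives some $f\ll\mathrm{id}$ with $f(a)=a$ and $f(b)=b$. Then $f(C)\subseteq C$ and $f(x)\le x$ for every $x$.
\begin{itemize}
\item Let $\psi(y)$ be the largest element of $C$ strictly below $y$. It exists because $C$ has no bottom and is downward well-ordered.
\item For $c\in C$, let $\phi_c$ be the identity on $\{\bot,a,b\}\cup\{x\in C\mid x\ge c\}$ and equal to $\psi\circ f$ on $\{x\in C\mid x<c\}$.
\item These maps are monotone, hence Scott-continuous, and they form a chain (increasing as $c$ decreases) with supremum $\mathrm{id}$.
\item No $\phi_c$ lies above $f$: evaluate at any $x<c$, where $\phi_c(x)=\psi(f(x))<f(x)$.
\end{itemize}
This contradicts $f\ll\mathrm{id}$.
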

\begin{proof}
It is proved in \cite[Theorem 1.37]{jung88c} that a dcpo with continuous function space is bicomplete. Obviously, $\mathcal{K}(C)_\bot$ is not bicomplete, then the function space $[\mathcal{K}(C)_\bot\to \mathcal{K}(C)_\bot]$ is not continuous.

Let $Q=W(A)^\top_\bot$. Since $Q$ is algebraic, it is continuous and hence locally compact and sober by Proposition~\ref{b2}.
It is also pointed and bicomplete: indeed, $Q$ has finite height, and hence every
filtered subset of $Q$ has a least element. The dcpo $Q$ is not coherent; $Q$ is not an $L$-dcpo,
as in the principal ideal $\da\top=Q$ the two elements $a$ and $b$ do not have a 
least upper bound. Thus, by letting $D=E=Q$, Theorem~\ref{thm:jia4.4.2} implies that $\End{Q}$ is not meet-continuous, hence not continuous.
\end{proof}

\begin{theorem}\label{thm:function-space-consequence}
Let $L$ be either 
\begin{itemize}
    \item a bounded algebraic domain, or
    \item a bounded continuous domain with $\Sigma L$ being hereditarily Lindel\"of and weakly Hausdorff. 
\end{itemize}
If the function space $\End{L}$ is continuous, then $L$ is coherent, or equivalently, Lawson compact.
\end{theorem}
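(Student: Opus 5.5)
We argue by contraposition. Suppose $L$ is not coherent. In the first case ($L$ a bounded algebraic domain) Theorem~\ref{b8} supplies a Scott-continuous retract $P$ of $L$ with $P=\mathcal{K}(C)_\bot$ or $P=W(E)^\top_\bot$. Here $C$ is a downward well-ordered chain without bottom and $E$ is an infinite anti-chain. In the second case ($L$ bounded continuous, with $\Sigma L$ hereditarily Lindel\"of and weakly Hausdorff) Theorem~\ref{c6} supplies the same kind of retract, this time with $E$ countably infinite.

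Next we transport the retraction to function spaces. By Lemma~\ref{lem:function-space-retract}, $\End{P}$ is a Scott-continuous retract of $\End{L}$. If $\End{L}$ were continuous, Lemma~\ref{lem:continuous-retract} would make $\End{P}$ continuous. This contradicts Proposition~\ref{prop:function-spaces-not-continuous}, which says that neither $\End{\mathcal{K}(C)_\bot}$ nor $\End{W(A)^\top_\bot}$ is continuous for any such $C$ and any infinite anti-chain $A$. Hence continuity of $\End{L}$ forces $L$ to be coherent.

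For the phrase ``equivalently, Lawson compact'' we appeal to Corollary~\ref{al-lawson} in the algebraic case and to Corollary~\ref{con-lawson} in the continuous case. Both state that, under the respective hypotheses, coherence is equivalent to Lawson compactness. Alternatively, one can use Theorem~\ref{b10} directly: a continuous domain with a bottom element is sober, hence well-filtered, and it is compact in the Scott topology because every open cover must contain an open set containing $\bot$, which is the whole space.

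There is no real obstacle, since every ingredient is already stated. The only point needing care is that Lemma~\ref{lem:continuous-retract} applies: it needs $\End{L}$, the larger dcpo, to be continuous, and that is exactly our hypothesis. It also needs the retraction to be Scott-continuous with respect to the pointwise order on the function spaces, and this is what Lemma~\ref{lem:function-space-retract} provides.
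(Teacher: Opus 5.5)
Your proposal is correct and follows the paper's own proof essentially step for step. You obtain a retract from Theorem~\ref{b8} or Theorem~\ref{c6}, transport it to function spaces via Lemma~\ref{lem:function-space-retract}, derive a contradiction with Lemma~\ref{lem:continuous-retract} and Proposition~\ref{prop:function-spaces-not-continuous}, and get the Lawson-compactness equivalence from Corollaries~\ref{al-lawson} and~\ref{con-lawson}. Your alternative justification through Theorem~\ref{b10} (sobriety, plus Scott compactness because any open set containing $\bot$ is the whole space) is also valid.
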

\begin{proof}
We prove this by contradiction and assume that $L$ is not coherent. Then
by Theorem~\ref{b8} and Theorem~\ref{c6}, either $\mathcal{K}(C)_\bot$ for some downward well-ordered chain $C$ without a bottom element, or $W(A)^\top_\bot$ for some infinite anti-chain $A$ appears in $L$ as a Scott-continuous retract of $L$. 
By Lemma~\ref{lem:function-space-retract}, we know that either $\End{\mathcal{K}(C)_\bot}$ or $\End{W(A)^\top_\bot}$ is a Scott-continuous retract of $\End{L}$. As neither $\End{\mathcal{K}(C)_\bot}$ nor $\End{W(A)^\top_\bot}$ is continuous by  Proposition~\ref{prop:function-spaces-not-continuous}, by the contrapositive of Lemma~\ref{lem:continuous-retract} $\End{L}$ is not continuous. This contradicts our assumption and hence $L$ is coherent, or equivalently, Lawson compact by Corollary~\ref{al-lawson} and Corollary~\ref{con-lawson}.
\end{proof}
}

\section{Closing remarks and questions}

We give element-level descriptions for coherence on algebraic domains, and show an algebraic domain $L$ is coherent if and only if $L^\top_\bot$ does not contain copies of $\mathcal K(C)_\bot$ or $W(E)^\top_\bot$ as Scott-continuous retracts (a combination of Theorem~\ref{b8} and Remark~\ref{b9}).  Similar results can be {\color{black}proven for}  domains, but we have to assume the domains are hereditarily Lindel\"of and weakly Hausdorff (Theorem~\ref{c6}). As a byproduct, these results also provide element-level descriptions for Lawson compactness (Corollary~\ref{al-lawson} and Corollary~\ref{con-lawson}). 

{\color{black} In the light of Section~5, we expect that our results could be used in analyzing function spaces of other domains, and we pose the following question:}
\begin{problem}
Does Theorem~\ref{c6} still hold if we remove weak Hausdorffness or  hereditarily Lindel\"ofness in the assumption, or both?    
\end{problem}

\bibliographystyle{elsarticle-num}

\end{document}